\title{Puncture loops on a non-orientable surface}
\author{Aoi Wakuda}
\subjclass[2020]{Primary 57K20; Secondary 57M50}
\address{GRADUATE SCHOOL OF MATHEMATICAL SCIENCES, UNIVERSITY OF TOKYO, 3-8-1 KOMABA, MEGURO-KU, TOKYO, 153-8914,
JAPAN}
\email{aoichan19991226@g.ecc.u-tokyo.ac.jp}
\newtheorem{thm}{Theorem}[section]
\newtheorem{prop}[thm]{Proposition}
\newtheorem{lemma}[thm]{Lemma}
\newtheorem{cor}[thm]{Corollary}
\newtheorem*{mthm*}{Main Theorem}
\theoremstyle{definition}
\newtheorem{ex}[thm]{Example}
\newcommand{\Al}{\alpha}
\newcommand{\Be}{\beta}
\newcommand{\vEp}{\varepsilon}
\newcommand{\tr}{\!\operatorname{Tr}}
\newcommand{\cscd}{\operatorname{csc}}
\newcommand{\cotd}{\operatorname{cot}}
\begin{document}
\begin{abstract}
 On a connected surface \( N \) with negative Euler characteristic, the free homotopy class of a loop obtained by smoothing an intersection of two closed geodesics may wind around a puncture. Chas and Kabiraj showed that this phenomenon does not occur when the surface \( N \) is orientable. In this paper, we prove that it occurs when \( N \) is non-orientable and both geodesics involved in the smoothing are actually one-sided. In particular, we study a loop obtained by traversing a one-sided closed geodesic and the $m$-th power of another one-sided closed geodesic for odd $m$. Then we show that its free homotopy class may wind aroud a puncture at most two values of $m$. Furthermore, if two such \( m\)'s exist, they are consecutive odd integers.
\end{abstract}
\maketitle

\section{Introduction}

The study of closed geodesics on hyperbolic surfaces has been an area of interest in geometry and topology. The structure of free homotopy classes of loops obtained by smoothing intersections of closed geodesics has been explored in various contexts. Chas and Kabiraj \cite{Chas-Kabiraj} showed that for an orientable hyperbolic surface, the smoothing of two closed geodesics never produces a loop homotopic to a puncture. This result naturally raises the question of whether a similar phenomenon holds in the non-orientable case. In this paper, we assume that hyperbolic metric is complete.

For non-orientable surfaces, understanding loops provides insight into their geometric and topological properties. The fundamental groups of such surfaces contain elements corresponding to glide-reflections rather than purely hyperbolic transformations, which affects the structure of closed geodesics and their intersections. Norbury \cite{Norbury2008} generalized McShane’s identity \cite{McShane1991} to non-orientable surfaces. 
In his work, starting from the trace identity given as (11) in \cite[Section 3]{Norbury2008}, 
he derived a length identity that relates two geodesics with a transverse intersection point on a non-orientable surface 
and the geodesic representative of the loop obtained by smoothing it. 
His approach is based on trace identities involving commutators in $PGL(2, \mathbb{R})$. 
In this paper, we take a direct approach of computing hyperbolic lengths without relying on trace identities. It reveals that the loop obtained by smoothing an intersection point of two closed geodesics on a non-orientable surface can sometimes be freely homotopic to a puncture.

In this paper, we also study two closed geodesics that intersect transversely at a point \( P \) on a non-orientable hyperbolic surface. We consider the loop obtained by traversing one of them and traversing the other \( m \) times before smoothing at \( P \). We prove that the integers \( m \) for which this new loop is freely homotopic to a puncture are odd, and such integers occur at most twice. Furthermore, if two such values exist, they must be consecutive odd integers. Our approach is rooted in hyperbolic geometry, where the isometry corresponding to a closed one-sided geodesic is a glide-reflection. By using the classification of isometries in $\operatorname{PSL}_2^{\pm}(\mathbb{R})$ and their translation lengths, we establish explicit conditions on the forward angle at the intersection point that determine whether the resulting loop can be a puncture loop.

%Our results build upon classical work in hyperbolic geometry, particularly the trace formula for the product of hyperbolic isometries \cite{Beardon1983}, as well as recent developments in the study of closed geodesics on non-orientable surfaces \cite{Ohsika2021}. Moreover, they provide new observations on the interaction between geodesic flows and the algebraic structure of non-orientable surface groups.
As an application, in our next paper, we will determine the center of the Goldman Lie algebra with \(\mathbb{Z}/2\mathbb{Z}\) coefficients by using Lemma \ref{general_intersection}.

\noindent
\textbf{Organization of the paper.} 
In section 2, we provide a brief review of hyperbolic geometry. Additionally, we examine some relation between glide-reflections and orientation-preserving hyperbolic elements. In section 3, we investigate the phenomenon where a loop, obtained by smoothing an intersection of two one-sided geodesics, may wind around a puncture. 

\noindent
\textbf{Acknowledgment.} The author would like to thank my supervisor, Nariya Kawazumi, for many discussions and helpful advice. The author is also grateful to Ken'ichi Ohsika for providing valuable comments and to Toyo Taniguchi for engaging in insightful discussions. This work was supported by the WINGS-FMSP program at the Graduate School of Mathematical Sciences, the University of Tokyo.
\section{isometries on hyperbolic plane}
Let $\mathbb{H}$ be the upper half-plane model in hyperbolic geometry. The group of isometries of $\mathbb{H}$ is given by
\[
\operatorname{PSL}^{\pm}_2(\mathbb{R}) = \left\{ \pm \begin{pmatrix} a & b \\ c & d \end{pmatrix} \middle| a, b, c, d \in \mathbb{R}, ad - bc = \pm 1 \right\}.
\] 
Each isometry \( g \in \operatorname{PSL}^{\pm}_2(\mathbb{R}) \) is classified according to the absolute value of its trace (for example, see \cite{Wilkie1966}):

- Case 1: \( \det g = 1 \) (Orientation-preserving isometries)
  \begin{itemize}
      \item \textbf{Elliptic} if $|\operatorname{Tr} g| < 2$. In this case, $g$ has a unique fixed point in $\mathbb{H}$.
      \item \textbf{Parabolic} if $|\operatorname{Tr} g| = 2$. In this case, $g$ has a unique fixed point on the real axis.
      \item \textbf{Hyperbolic} if $|\operatorname{Tr} g| > 2$. In this case, $g$ has exactly two fixed points on the real axis.
  \end{itemize}

- Case 2: \( \det g = -1 \) (Orientation-reversing isometries)
  \begin{itemize}
      \item \textbf{Reflection} if $\operatorname{Tr} g = 0$. In this case, $g$ is an involution with a geodesic of fixed points.
      \item \textbf{Glide-reflection} if $\operatorname{Tr} g \neq 0$. In this case, $g$ has two fixed points on the real axis.
  \end{itemize}

For each \( g \in \operatorname{PSL}^{\pm}_2(\mathbb{R}) \), the translation length \( t_g \) is defined as:
\[
t_g = \inf_{z \in \mathbb{H}} d(z, gz).
\]
If $t_g$ is positive, we call $g$ a \textit{positive translation isometry}.  
For a positive translation isometry \( g \), the absolute value of the trace of \( g \) satisfies the following:
\begin{align} \label{Trcoshsinh}
|\!\operatorname{Tr} g| =
\begin{cases}
2 \cosh \dfrac{t_g}{2}, & \text{if } g \text{ is hyperbolic}, \\
2 \sinh \dfrac{t_g}{2}, & \text{if } g \text{ is a glide-reflection}.
\end{cases}
\end{align}

For a positive translation isometry \( g  \in \operatorname{PSL}^{\pm}_2(\mathbb{R}) \), we define \( A_g \) as the axis of \( g \), i.e., the geodesic joining the two fixed points of \( g \), and let \( \rho_{A_g} \) be the reflection with respect to \( A_g \).

We state the following result from \cite[Theorem 7.38.6]{Beardon1983} because it will be used later.
\begin{thm} \label{thm two-sided two-sided}
\cite[Theorem 7.38.6]{Beardon1983}
Let $g$ and $h$ be hyperbolic transformations of the hyperbolic plane and suppose that $A_{g}$ and $A_{h}$ intersect at a point $P$. Denote by $\theta_P$ the angle at P between forward direction of $A_{g}$ and $A_{h}$. Then the composition $g\circ h$ is hyperbolic and 
\begin{align} \label{Trcoshcoshsinhsinhcos}
\frac{1}{2}|\!\operatorname{Tr} gh| = \cosh\left(\frac{t_{g}}{2}\right)\cosh\left(\frac{t_{h}}{2}\right) + \sinh\left(\frac{t_{g}}{2}\right)\sinh\left(\frac{t_{h}}{2}\right)\cos(\theta_P).
\end{align}
\end{thm}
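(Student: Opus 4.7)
The plan is to normalize by an isometry of $\mathbb{H}$ and then compute $\operatorname{Tr}(gh)$ by direct matrix multiplication. Since $\operatorname{PSL}_2(\mathbb{R})$ acts transitively on the unit tangent bundle of $\mathbb{H}$ and traces are conjugation invariant, I would first arrange that $P = i$ and that $A_g$ coincides with the imaginary axis oriented upward. In this normalization $g$ is represented by $\begin{pmatrix} e^{t_g/2} & 0 \\ 0 & e^{-t_g/2} \end{pmatrix}$, while $h$ becomes $R\,h_0\,R^{-1}$, where $h_0 = \begin{pmatrix} e^{t_h/2} & 0 \\ 0 & e^{-t_h/2} \end{pmatrix}$ is the analogous hyperbolic translation along the imaginary axis and $R$ is the element of the stabilizer of $i$ in $\operatorname{PSL}_2(\mathbb{R})$ that rotates $\mathbb{H}$ about $i$ by $\theta_P$, carrying the forward direction of $A_g$ at $P$ to that of $A_h$. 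Using the isomorphism $\operatorname{Stab}(i) \cong \operatorname{SO}(2)$, an explicit check gives $R = \begin{pmatrix} \cos(\theta_P/2) & \sin(\theta_P/2) \\ -\sin(\theta_P/2) & \cos(\theta_P/2) \end{pmatrix}$.

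The second step is to multiply out $\operatorname{Tr}(g \cdot R h_0 R^{-1})$. Writing $c = \cos(\theta_P/2)$ and $s = \sin(\theta_P/2)$, a routine calculation yields
\[
\operatorname{Tr}(gh) \;=\; 2c^2 \cosh\!\left(\frac{t_g + t_h}{2}\right) + 2s^2 \cosh\!\left(\frac{t_g - t_h}{2}\right).
\]
Expanding each hyperbolic cosine via the addition formula and then invoking $c^2 + s^2 = 1$ together with the double-angle identity $c^2 - s^2 = \cos(\theta_P)$ collapses this expression into $2\cosh(t_g/2)\cosh(t_h/2) + 2\cos(\theta_P)\sinh(t_g/2)\sinh(t_h/2)$, and dividing by two gives identity (\ref{Trcoshcoshsinhsinhcos}). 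To verify that $gh$ is actually hyperbolic, I would note that the right-hand side is bounded below by $\cosh\!\bigl((t_g - t_h)/2\bigr) \geq 1$, with equality only if $\cos(\theta_P) = -1$ and $t_g = t_h$; but in that case $h = g^{-1}$, which would force $A_g$ and $A_h$ to coincide as unoriented geodesics, contradicting the hypothesis that they intersect transversely at a single point $P$.

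The only genuinely subtle point is fixing the sign conventions: I must check that the rotation $R$ really sends the forward direction of $A_g$ to the forward direction of $A_h$ (rather than to the opposite direction), so that $c^2 - s^2$ collapses to $\cos(\theta_P)$ and not to $\cos(\pi - \theta_P) = -\cos(\theta_P)$. Once this is pinned down consistently, the remaining work is a short algebraic manipulation driven by the explicit matrix forms coming from the classification of isometries in $\operatorname{PSL}_2(\mathbb{R})$, and no further obstacle arises.
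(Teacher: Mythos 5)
Your computation is correct, but note that the paper never proves this statement itself: it is quoted from Beardon, and the author explicitly remarks (just before Theorem \ref{Trgh}) that Beardon's proof goes through the hyperbolic law of cosines applied to the triangle cut out by the three axes $A_g$, $A_h$, $A_{gh}$. Your route — normalize $P=i$ and $A_g$ to the upward imaginary axis, write $h=Rh_0R^{-1}$ with $R\in\operatorname{Stab}(i)$ the rotation by $\theta_P$, and expand $\operatorname{Tr}(gh)=2\cos^2(\theta_P/2)\cosh\bigl(\tfrac{t_g+t_h}{2}\bigr)+2\sin^2(\theta_P/2)\cosh\bigl(\tfrac{t_g-t_h}{2}\bigr)$ — is a valid alternative, and it is essentially the same style of argument the paper uses for its own generalization in Theorem \ref{Trgh}, where instead of conjugating by a rotation the author recovers the entries of $h$ from its trace and its fixed points $\cot\theta_P\pm\csc\theta_P$; your conjugation bookkeeping is arguably tidier, and the sign issue you flag is genuinely the only delicate point (one must land on $\theta_P$ rather than $\pi-\theta_P$, i.e.\ on $h$ rather than $h^{-1}$; the rotational sense is irrelevant since only $\cos^2-\sin^2=\cos\theta_P$ survives). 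Your hyperbolicity argument also works: the computed trace is at least $2\cosh\bigl(\tfrac{t_g-t_h}{2}\bigr)\ge 2$, so the absolute value is harmless, and strictness holds because equality would force $\cos\theta_P=-1$; your detour through $h=g^{-1}$ is slightly roundabout (and stated in the reverse logical order), since $\cos\theta_P=-1$ by itself already makes the tangent directions at $P$ opposite, hence the unoriented axes coincide, contradicting a transverse crossing — but the conclusion is the same. What Beardon's geometric proof buys in exchange is information about the location and direction of $A_{gh}$ relative to the other two axes, which is the kind of data the paper extracts separately in Theorems \ref{thm_hyp_gli} and \ref{thm one-sided two-sided}; your purely algebraic computation yields the trace identity alone.
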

Now, we generalize the above theorem to positive translation isometries.
The proof of \cite[Theorem 7.38.6]{Beardon1983} uses the law of cosines for the triangle formed by the three axes \(A_g\), \(A_h\), and \(A_{gh}\) to compute the absolute value of the trace. In this paper we compute the absolute value of the trace by matrix computations as follows.
%While the original proof is geometric, we provide a proof based on matrix calculations below.

\begin{thm} \label{Trgh}
Let $g$ and $h$ be positive translation isometries and suppose that $A_{g}$ and $A_{h}$ intersect at a point $P$.  Denote by $\theta_P$ the angle at P between forward direction of $A_{g}$ and $A_{h}$. Then, the following hold.

\noindent \textbf{Case 1:} If \(g\) is a glide-reflection and \(h\) is a hyperbolic element, 
\begin{align} \label{Trsinhcoshcoshsinhcos}
    \frac{1}{2}|\!\operatorname{Tr} gh| = \left|\sinh\left(\frac{t_g}{2}\right)\cosh\left(\frac{t_h}{2}\right) + \cosh\left(\frac{t_g}{2}\right)\sinh\left(\frac{t_h}{2}\right)\cos\theta_P\right|.
\end{align} \label{Trsinhsinhcoshcoshcos}
\textbf{Case 2:} If both \(g\) and \(h\) are glide-reflections, 
\begin{align}
    \frac{1}{2}|\!\operatorname{Tr} gh| = \left|\sinh\left(\frac{t_g}{2}\right)\sinh\left(\frac{t_h}{2}\right) + \cosh\left(\frac{t_g}{2}\right)\cosh\left(\frac{t_h}{2}\right)\cos\theta_P\right|.
\end{align}
\end{thm}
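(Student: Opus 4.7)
The idea is to normalize by conjugation so that $A_g$ is the imaginary axis of $\mathbb{H}$ with forward direction pointing upward and $P = i$, and then to carry out an explicit matrix computation of $\operatorname{Tr}(gh)$. In these coordinates $g$ is represented by
\[
g = \begin{pmatrix} \epsilon_g\, e^{t_g/2} & 0 \\ 0 & e^{-t_g/2} \end{pmatrix},
\]
where $\epsilon_g = 1$ if $g$ is hyperbolic and $\epsilon_g = -1$ if $g$ is a glide-reflection, so that $\det g = \epsilon_g$ and $|\!\operatorname{Tr} g|$ agrees with~\eqref{Trcoshsinh}. I likewise take $h_0 = \operatorname{diag}(\epsilon_h e^{t_h/2}, e^{-t_h/2})$ and conjugate by the elliptic rotation
\[
R_{\theta_P} = \begin{pmatrix} \cos(\theta_P/2) & -\sin(\theta_P/2) \\ \sin(\theta_P/2) & \cos(\theta_P/2) \end{pmatrix},
\]
which fixes $i$ and rotates its tangent space by $\theta_P$. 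Then $h := R_{\theta_P}\, h_0\, R_{\theta_P}^{-1}$ is a positive translation isometry whose axis $A_h$ meets $A_g$ at $P$ with forward angle $\theta_P$.

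The next step is to multiply out $gh$ as a $2 \times 2$ matrix and read off its trace. Writing $c := \cos(\theta_P/2)$ and $s := \sin(\theta_P/2)$, the trace becomes a linear combination of the four exponentials $e^{\pm(t_g \pm t_h)/2}$ with coefficients among $c^2$ and $s^2$. Grouping the terms in pairs and using $c^2 + s^2 = 1$, $c^2 - s^2 = \cos\theta_P$, together with the addition formulae for $\sinh$ and $\cosh$, the expression collapses to
\[
\tfrac{1}{2}\operatorname{Tr}(gh) = \epsilon_g \epsilon_h \bigl[\, X_g X_h + Y_g Y_h \cos\theta_P \,\bigr],
\]
where for each isometry $(X, Y) = (\cosh(t/2), \sinh(t/2))$ if it is hyperbolic and $(X, Y) = (\sinh(t/2), \cosh(t/2))$ if it is a glide-reflection. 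Taking absolute values eliminates the overall sign $\epsilon_g \epsilon_h$, and specializing $(\epsilon_g, \epsilon_h) = (-1, 1)$ and $(\epsilon_g, \epsilon_h) = (-1, -1)$ yields Case~1 and Case~2 respectively.

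The main obstacle I foresee is sign bookkeeping. The diagonal signs $\epsilon_g, \epsilon_h$ and the orientation conventions for ``forward direction'' must be handled carefully so that the angle appearing in the final formula is exactly $\theta_P$, not $\pi - \theta_P$ or $-\theta_P$, and so that no spurious sign appears in front of $\cos\theta_P$. A natural sanity check is that the same computation with $\epsilon_g = \epsilon_h = 1$ should reproduce Theorem~\ref{thm two-sided two-sided}; verifying this both confirms that the normalization is correct and shows that all three cases (including Beardon's original) follow uniformly from the single matrix calculation above.
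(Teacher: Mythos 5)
Your proposal is correct and takes essentially the same route as the paper: normalize so that $A_g$ is the imaginary axis with $P=i$, represent $g$ by a diagonal matrix of determinant $\pm 1$, and obtain the trace formula by an explicit $2\times 2$ computation (indeed, expanding $\operatorname{Tr}(gh)$ in the exponentials $e^{\pm(t_g\pm t_h)/2}$ and using $\cos^2(\theta_P/2)-\sin^2(\theta_P/2)=\cos\theta_P$ collapses it to $\epsilon_g\epsilon_h\bigl[X_gX_h+Y_gY_h\cos\theta_P\bigr]$ exactly as you claim, recovering Beardon's case when $\epsilon_g=\epsilon_h=1$). The only difference is cosmetic: the paper determines the entries of $h$ from the fixed points of $A_h$ via Vieta's formulas (Case 1) and by composing with the reflection in $A_h$ (Case 2), whereas you conjugate a diagonal matrix by a rotation fixing $i$ — which produces the same matrices — and your worry about the sense of the rotation (your $R_{\theta_P}$ actually rotates the tangent space at $i$ by $-\theta_P$) is harmless because only $\cos\theta_P$ appears in the final identity.
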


\begin{proof}
Without loss of generality, we may assume the axis \( A_g \) equals the imaginary axis with the intersection point \( P \in A_g \cap A_h \) located at \( i \in \mathbb{H}\) . See Figure \ref{fig_trace_of_gh}.

\begin{figure}[H]
\centering\includegraphics[width=0.6\linewidth]{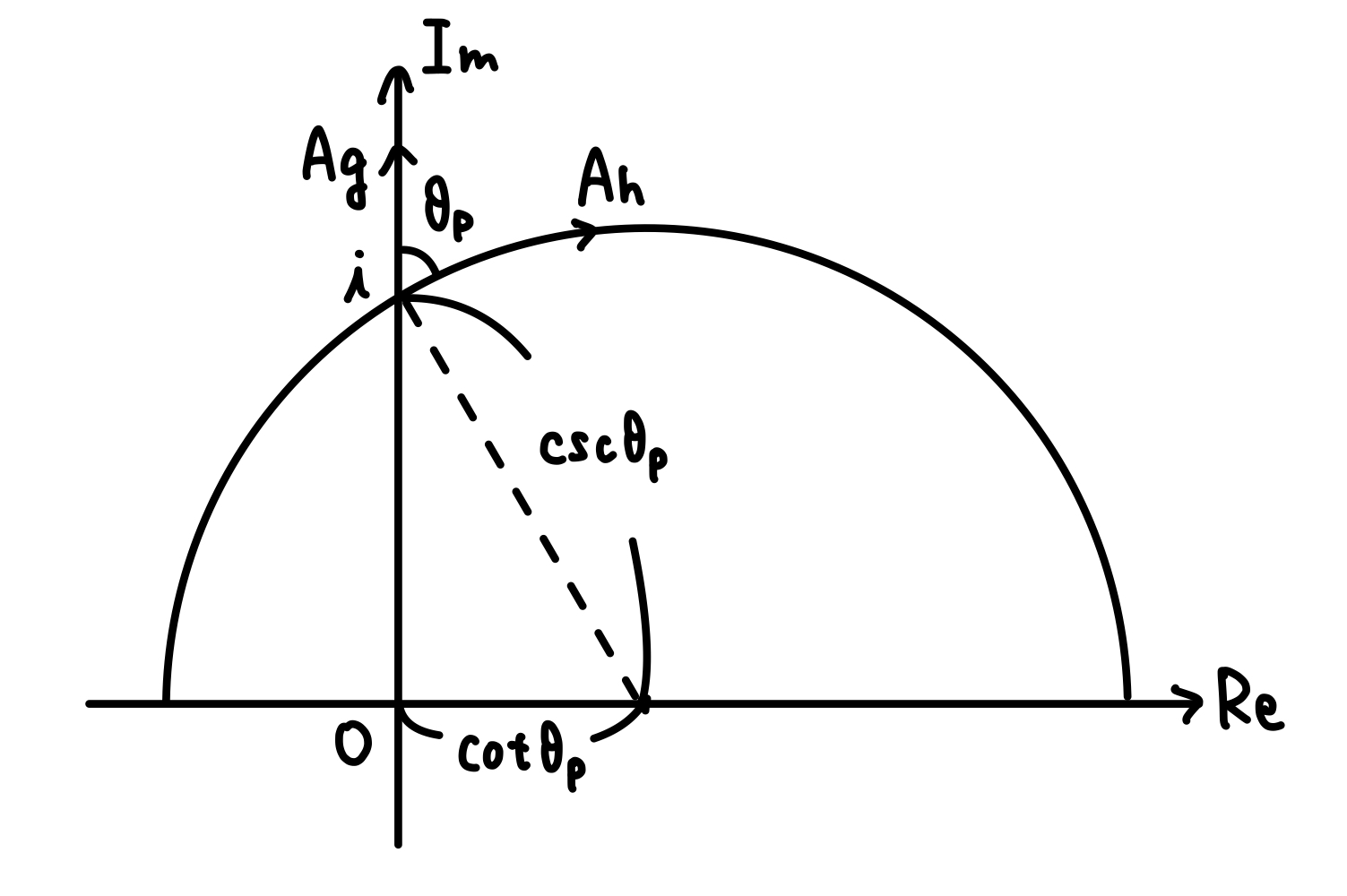}
\caption{Two intersecting axes \( A_g \) and \( A_h \) in the upper half-plane \( \mathbb{H} \).}
\label{fig_trace_of_gh}
\end{figure}

First, we consider the case (1), where $g$ is a glide-reflection and $h$ is a hyperbolic element:
\[
g = 
\begin{pmatrix}
e^{t_g/2} & 0 \\
0 & -e^{-t_g/2}
\end{pmatrix}, \quad
h = 
\begin{pmatrix}
p & q \\
r & s
\end{pmatrix}, \quad
\text{where } ps - qr = 1 \text{ and } p+s > 0.
\]
From the definition of the trace,
\[
|\tr h| = p + s = 2 \cosh \frac{t_h}{2}.
\]
Let $z$ be the fixed point of $h$. By Figure \ref{fig_trace_of_gh},
\[
z = \cotd \theta_p \pm \cscd \theta_p,
\]
Moreover, from the definition of the fixed point,
\[
\frac{pz+q}{rz+s} = z.
\]
By the relation between the roots and the coefficients, we have
\[
\begin{cases}
p - s = 2r \cotd \theta_p, \\
q = r.
\end{cases} 
\]
Hence,
\[
p = \cosh \frac{t_h}{2} + \sinh \frac{t_h}{2} \cos \theta_p, \quad 
q = r = \sinh \frac{t_h}{2} \sin \theta_p, \quad
s = \cosh \frac{t_h}{2} - \sinh \frac{t_h}{2} \cos \theta_p.
\]
Therefore, we get
\[
\frac{1}{2}|\!\operatorname{Tr} gh| = \left|\sinh\left(\frac{t_g}{2}\right)\cosh\left(\frac{t_h}{2}\right) + \cosh\left(\frac{t_g}{2}\right)\sinh\left(\frac{t_h}{2}\right)\cos\theta_P\right|.
\]
Next, we consider the case (2), where both $g$ and $h$ are glide-reflections. The reflection by \( A_h \) is given by
\[
\begin{pmatrix}
-\cos\theta_p & -\sin\theta_p \\
-\sin\theta_p & \cos\theta_p
\end{pmatrix},
\]
so we can write
\[
\begin{aligned}
h &=
\begin{pmatrix}
-\cos\theta_p & -\sin\theta_p \\
-\sin\theta_p & \cos\theta_p
\end{pmatrix}
\begin{pmatrix}
\cosh\dfrac{t_h}{2} + \sinh\dfrac{t_h}{2}\cos\theta_p & \sinh\dfrac{t_h}{2}\sin\theta_p \\
\sinh\dfrac{t_h}{2}\sin\theta_p & \cosh\dfrac{t_h}{2} - \sinh\dfrac{t_h}{2}\cos\theta_p
\end{pmatrix} \\
&=
\begin{pmatrix}
-\cosh\dfrac{t_h}{2}\cos\theta_p - \sinh\dfrac{t_h}{2} & -\cosh\dfrac{t_h}{2}\sin\theta_p \\
-\cosh\dfrac{t_h}{2}\sin\theta_p & \cosh\dfrac{t_h}{2}\cos\theta_p - \sinh\dfrac{t_h}{2}
\end{pmatrix}.
\end{aligned}
\]
Thus, we have
\[
\frac{1}{2}|\!\operatorname{Tr} gh| = \left|\sinh\left(\frac{t_g}{2}\right)\sinh\left(\frac{t_h}{2}\right) + \cosh\left(\frac{t_g}{2}\right)\cosh\left(\frac{t_h}{2}\right)\cos\theta_P\right|.
    \]
\end{proof}

On the other hand, when considering the relative positions of the axes, a geometric approach, similar to the proof of \cite[Theorem 7.38.6]{Beardon1983}, provides a clearer understanding. Thus, we adopt this approach in the proof.

\begin{thm} \label{thm_hyp_gli}
Let $g$ be a hyperbolic element and $h$ be a glide-reflection. Suppose that $A_{g}$ and $A_{h}$ intersect at a point $P$. Denote by $\theta_P$ the angle at P between forward direction of $A_{g}$ and $A_{h}$. Then the axis \( A_{gh} \) intersects transversely with the axis \( A_g \) at the point \( Q \) located at a distance of \( t_g/2 \) from \( P \) in the direction of \( g \).
\end{thm}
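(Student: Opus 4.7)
The plan is to exhibit a $\pi$-rotation that conjugates $gh$ to its inverse; its centre must then lie on $A_{gh}$. The key preparatory fact is that for any positive translation isometry $\phi$ (hyperbolic or glide-reflection) with axis $A_\phi$, and for any point $R\in A_\phi$, the rotation $\iota_R$ by angle $\pi$ about $R$ satisfies $\iota_R\phi\iota_R=\phi^{-1}$. For hyperbolic $\phi$ this is classical; for $\phi=\rho_{A_\phi}\circ\tau_\phi$ a glide-reflection, $\iota_R$ preserves $A_\phi$ setwise (so conjugation fixes $\rho_{A_\phi}$) while reversing orientation along $A_\phi$ (so conjugation sends $\tau_\phi$ to $\tau_\phi^{-1}$), and the two factors combine to give $\phi^{-1}$.

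I would apply this with $R=P\in A_g\cap A_h$, obtaining $\iota_P g\iota_P=g^{-1}$ and $\iota_P h\iota_P=h^{-1}$, and then set $\sigma:=g\circ\iota_P$. A one-line calculation shows $\sigma^{2}=g(\iota_P g\iota_P)=g\cdot g^{-1}=e$; since $\sigma$ is orientation-preserving, it is itself a $\pi$-rotation about some point $Q$, so $\sigma=\iota_Q$. A short telescoping using both inversion identities then yields
\[
\sigma\,(gh)\,\sigma^{-1}=(gh)^{-1},
\]
so $\iota_Q$ sends $A_{gh}$ to itself. Because a $\pi$-rotation preserves a geodesic setwise only when its centre lies on that geodesic (the induced involution of the geodesic must have a fixed point, which has to be $Q$), we conclude $Q\in A_{gh}$.

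To locate $Q$, observe that $\sigma(P)=g(\iota_P(P))=g(P)$, so $\iota_Q$ exchanges $P$ and $g(P)$, whence $Q$ is their hyperbolic midpoint. Since $P$ and $g(P)$ both lie on $A_g$ at distance $t_g$ apart, $Q$ lies on $A_g$ at distance $t_g/2$ from $P$ in the forward direction of $g$. For transversality, if $A_{gh}=A_g$ then both $g$ and $gh$ preserve $A_g$, hence so does $h=g^{-1}(gh)$; by uniqueness of the $h$-invariant geodesic this forces $A_h=A_g$, contradicting the transverse intersection at $P$. The main obstacle is the inversion identity $\iota_R\phi\iota_R=\phi^{-1}$ in the glide-reflection case; once that is in hand, the involution trick immediately delivers both the position of $Q$ and the transversality.
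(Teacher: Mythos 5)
Your argument is correct, and it reaches the conclusion by a genuinely different route from the paper. The paper proves Theorem \ref{thm_hyp_gli} by explicitly factoring both isometries into involutions ($g=\varepsilon_{v_2}\varepsilon_{v_1}$ via Proposition \ref{rot. and rot.}, $h=\rho_{v_1v_3}\varepsilon_{v_1}\varepsilon_{v_3}$), introducing the auxiliary perpendiculars $L_1,L_2$ and the point $v_4$, and recombining to exhibit $gh=\rho_{L_2}\varepsilon_{v_2}\varepsilon_{v_4}$, from which the axis of $gh$ is read off concretely as the geodesic $L_2$ through $Q=v_2$. You instead isolate the half-turn inversion lemma ($\iota_R\phi\iota_R=\phi^{-1}$ for $R$ on the axis of a hyperbolic or glide-reflection $\phi$, the glide case resting on the commutation of $\rho_{A_\phi}$ with the translation along $A_\phi$, in the spirit of (\ref{rot_and_ref})), note that $\sigma=g\iota_P$ is an orientation-preserving involution --- hence a half-turn $\iota_Q$, which is in fact the paper's $\varepsilon_{v_2}$ --- and conclude $Q\in A_{gh}$ purely from the conjugation identity $\iota_Q(gh)\iota_Q^{-1}=(gh)^{-1}$, locating $Q$ as the midpoint of $P$ and $g(P)$. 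Your version is shorter, avoids the auxiliary construction entirely, supplies the transversality claim (which the paper leaves implicit), and applies verbatim in the degenerate case where $gh$ is a reflection (then the identity just says $\iota_Q$ commutes with $gh$, so it preserves the fixed geodesic); what it does not give is the extra geometric description of $A_{gh}$ as the perpendicular to $L_1$ through $Q$, which the paper's explicit factorization provides and which is what feeds the subsequent length computations. Two cosmetic points you should add: remark that $\sigma\neq\mathrm{id}$ (otherwise $g=\iota_P$ would be elliptic), and state explicitly that the axis of a conjugate $k\phi k^{-1}$ is $k(A_\phi)$ and that $A_{(gh)^{-1}}=A_{gh}$, which is what licenses ``$\iota_Q$ sends $A_{gh}$ to itself.''
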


To prove Theorem \ref{thm_hyp_gli}, we recall some basic propositions in hyperbolic geometry. It is easy to see that these propositions follow from the properties of hyperbolic isometries.

\begin{prop} \label{rot. and rot.}
In the hyperbolic plane $\mathbb{H}$, an isometry $g\in G$ is hyperbolic if and only if $g=\vEp_2 \vEp_1$ where $\vEp_1,\vEp_2$ are rotations of order two about some distinct points $v_1,v_2$. Then, the axis $A_g$ passes through $v_1,v_2$, its direction is from $v_1$ to $v_2$, and $2d(v_1,v_2)=t_g$
\end{prop}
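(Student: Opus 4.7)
My plan is to prove the ``if'' direction by a direct parameter computation on the geodesic through $v_1$ and $v_2$, and then to deduce the ``only if'' direction by the uniqueness of a hyperbolic isometry determined by its axis, translation length, and orientation.

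First I would check that each half-turn $\varepsilon_i$ is orientation-preserving, since a rotation of order two in $\mathbb{H}$ has a matrix representative in $\operatorname{PSL}_2(\mathbb{R})$ of trace $0$ and determinant $+1$. Hence $g := \varepsilon_2\varepsilon_1$ lies in $\operatorname{PSL}_2(\mathbb{R})$, and in particular it is orientation-preserving.

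Next I would exploit the geometric fact that a half-turn about a point $v$ preserves every geodesic through $v$ setwise and acts on it as the reflection about $v$. Let $\gamma$ be the geodesic through $v_1$ and $v_2$. Parametrizing $\gamma$ by arclength with $v_1$ at coordinate $0$ and $v_2$ at coordinate $d := d(v_1, v_2)$, the restrictions of $\varepsilon_1$ and $\varepsilon_2$ to $\gamma$ are $x \mapsto -x$ and $y \mapsto 2d - y$, respectively. Their composition is therefore $x \mapsto 2d + x$, a translation by $2d$ along $\gamma$ in the direction from $v_1$ to $v_2$. This translation is nontrivial since $v_1 \neq v_2$, so $g$ has no fixed point in $\mathbb{H}$ and positive translation length, forcing $g$ to be hyperbolic with axis $\gamma$, translation length $2d(v_1, v_2)$, and the claimed direction.

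For the converse, given a hyperbolic $g$ with axis $A_g$ and translation length $t_g$, I would pick any $v_1 \in A_g$ and let $v_2 \in A_g$ be the point at distance $t_g/2$ from $v_1$ in the forward direction of $g$. By the ``if'' direction already established, the product $\varepsilon_2\varepsilon_1$ of the corresponding half-turns is hyperbolic with axis $A_g$, translation length $t_g$, and the same forward direction as $g$; since a positive translation isometry is uniquely determined by these three data, it equals $g$. I expect no substantive obstacle, and the only point requiring care is the sign of the translation on $\gamma$, which I would reconfirm by an explicit matrix computation in the upper half-plane model with $A_g$ placed on the imaginary axis and $v_1 = i$, $v_2 = e^{t_g/2} i$, so as to rule out any ambiguity from the $\pm$ in $\operatorname{PSL}_2^{\pm}(\mathbb{R})$.
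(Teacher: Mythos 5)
Your proposal is correct. Note that the paper does not actually prove this proposition: it is stated as a basic fact, with the remark that it ``follows from the properties of hyperbolic isometries,'' so there is no proof in the paper to compare against; your argument is a standard and adequate way to fill in the omission. Two small points you could tighten. First, in the ``if'' direction, knowing that $\varepsilon_2\varepsilon_1$ translates $\gamma$ by $2d$ rules out fixed points \emph{on} $\gamma$, but to exclude an elliptic with center off $\gamma$ you should add one line: an elliptic isometry preserving a geodesic setwise must be a half-turn about a point of that geodesic (and then it reverses $\gamma$ rather than translating it), or, more cleanly, observe that $\varepsilon_2\varepsilon_1$ fixes both ideal endpoints of $\gamma$ and is not the identity, hence is hyperbolic with axis $\gamma$ and translation length equal to the displacement $2d$ along the axis. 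Second, in the converse, the uniqueness you invoke should be stated for \emph{hyperbolic} elements (axis, direction, and translation length do not distinguish a hyperbolic element from a glide-reflection with the same data); this is harmless here because your ``if'' direction already shows $\varepsilon_2\varepsilon_1$ is orientation-preserving, hence hyperbolic, and this is exactly the uniqueness statement the paper itself uses later in the proof of Theorem \ref{thm_hyp_gli}.
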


\begin{prop} \label{ref. and ref.}
In the hyperbolic plane $\mathbb{H}$, an isometry $g\in G$ is hyperbolic if and only if $g=\rho_2 \rho_1$where $\rho_1,\rho_2$ are reflections about some disjoint geodesics $L_1,L_2$. Then, the axis $A_g$ is orthogonal to $L_1,L_2$, its direction is from $L_1$ to $L_2$, and $2d(L_1,L_2)=t_g$
\end{prop}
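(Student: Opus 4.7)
The plan is to prove both directions of the biconditional by a direct geometric argument on the common perpendicular of $L_1$ and $L_2$, together with the fact that a nontrivial orientation-preserving isometry of $\mathbb{H}$ cannot pointwise fix a geodesic.

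For the implication $(\Leftarrow)$, suppose $g = \rho_2 \rho_1$ with $L_1 \cap L_2 = \emptyset$, and let $A$ be the common perpendicular of $L_1$ and $L_2$, meeting $L_i$ at $x_i$. Because $L_i \perp A$ at $x_i$, each $\rho_i$ preserves $A$ setwise and acts on it as the one-dimensional reflection about $x_i$; consequently $\rho_2 \rho_1$ acts on $A$ as the translation $s \mapsto s + 2\,d(x_1,x_2)$ in the direction from $x_1$ to $x_2$. Combined with the fact that $g$ is orientation-preserving (a product of two orientation-reversing maps), this forces $g$ to be hyperbolic with $A_g = A$, so $A_g \perp L_1, L_2$, translation length $t_g = 2\,d(L_1, L_2)$, and forward direction running from $L_1$ to $L_2$.

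For the implication $(\Rightarrow)$, let $g$ be hyperbolic with axis $A_g$ and translation length $t_g > 0$. Choose any $x \in A_g$ and let $m$ be the midpoint of $x$ and $g(x)$ along $A_g$. Define $L_1$ and $L_2$ to be the perpendiculars to $A_g$ through $x$ and $m$, respectively; these are ultraparallel because they are perpendiculars to the same geodesic at distinct points, and hence in particular disjoint. Applying $(\Leftarrow)$ to $g' := \rho_{L_2} \rho_{L_1}$ yields a hyperbolic element with axis $A_g$, translation length $2\,d(x,m) = t_g$, and the same forward direction as $g$, so $g$ and $g'$ agree pointwise on $A_g$. Then $g(g')^{-1}$ is orientation-preserving and fixes $A_g$ pointwise, hence equals the identity, so $g = \rho_{L_2}\rho_{L_1}$ and all the asserted geometric relations hold by construction.

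I do not anticipate any serious obstacle; the main care needed is bookkeeping of orientations, to verify that the translation of $\rho_2 \rho_1$ along $A$ is correctly oriented from $x_1$ to $x_2$, and hence from $L_1$ to $L_2$, thereby matching the forward direction of $A_g$.
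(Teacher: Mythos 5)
Your proof is correct. Note that the paper itself offers no argument for this proposition --- it is introduced with the remark that ``it is easy to see that these propositions follow from the properties of hyperbolic isometries'' --- so there is no proof to compare against; your write-up simply supplies the standard details. Both directions are sound: the one-dimensional reflection computation along the common perpendicular correctly identifies $\rho_2\rho_1$ as a translation of $A$ by $2d(x_1,x_2)$ toward $x_2$, and the rigidity step (an orientation-preserving isometry fixing a geodesic pointwise is the identity) cleanly closes the converse.

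One caveat worth making explicit: your ($\Leftarrow$) direction begins by taking \emph{the} common perpendicular of $L_1$ and $L_2$, which exists only when the two geodesics are ultraparallel, i.e.\ disjoint even in $\overline{\mathbb{H}}$. If ``disjoint'' is read as merely disjoint in $\mathbb{H}$, two asymptotic geodesics (sharing an ideal endpoint) qualify, and the product of their reflections is parabolic, not hyperbolic --- e.g.\ reflections in the vertical lines $\operatorname{Re}z=0$ and $\operatorname{Re}z=1$ compose to $z\mapsto z+2$. So either you should adopt Beardon's convention, in which ``disjoint'' excludes the asymptotic case (this is evidently the paper's intent, and in the application within the proof of Theorem \ref{thm_hyp_gli} the relevant geodesics are perpendiculars to a common geodesic, hence ultraparallel), or you should add a sentence ruling out the asymptotic configuration before invoking the common perpendicular. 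With that terminological point settled, the argument is complete.
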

Let \(L\) be a geodesic in \(\mathbb{H}\). For any point \(P \in L\), the rotation \(\vEp_P\) of order two about \(P\) commutes with the reflection \(\rho_L\) about \(L\). In other words, we have
\begin{align} \label{rot_and_ref}
\vEp_P \rho_L = \rho_L \vEp_P.
\end{align}
\begin{comment}
\begin{thm} \label{thm two-sided two-sided}
\cite[Theorem 7.38.6]{Beardon1983}
Let $g$ and $h$ be hyperbolic elements of the hyperbolic plane and suppose that $A_{g}$ and $A_{h}$ intersect at a point $P$. Denote by $\theta_P$ the angle at P between forward direction of $A_{g}$ and $A_{h}$. Then the composition $g\circ h$ is hyperbolic and 
\begin{align}
\cosh\left(\frac{t_{g\circ h}}{2}\right) = \cosh\left(\frac{t_{g}}{2}\right)\cosh\left(\frac{t_{h}}{2}\right) + \sinh\left(\frac{t_{g}}{2}\right)\sinh\left(\frac{t_{h}}{2}\right)\cos(\theta_P).
\end{align}
\end{thm}
We call an isometric transformation \(f \in Isom^-(\mathbb{H})\) a {\it glide-reflection} if there exists a hyperbolic element \(g \in Isom^+(\mathbb{H})\) such that \(f = \rho_{A_g} \circ g\). As in the case of ordinary hyperbolic elements, we define the axis of a glide-reflection \(f = \rho_{A_g} \circ g \in Isom^-(\mathbb{H})\) to be \(A_g\). Also, the translation length of \(f\) is equal to that of \(g\).
Next, let us observe how the result changes when the usual hyperbolic element in the above theorem is replaced with a glide-reflection. First, let us consider the case where one is a glide-reflection, and the other is a usual hyperbolic element.
\end{comment}
\begin{proof}[Proof of Theorem \ref{thm_hyp_gli}]

Let \( v_1 := P\) and \(v_2 := Q \). Define \( v_3 \) as the point located at a distance of \( \dfrac{t_h}{2} \) along the axis \( A_h \) in its negative direction from \( v_1 \). Next, let \( L_1 \) be the line passing through \( v_3 \) and perpendicular to \( A_h \), and \( L_2 \) the line passing through \( v_2 \) and perpendicular to \( L_1 \). Let \( v_4 \) be the intersection of \( L_1 \) and \( L_2 \) (see Figure \ref{fig.twosided_onesided}). 
\begin{figure}[H]
\centering\includegraphics[width=0.95\linewidth]{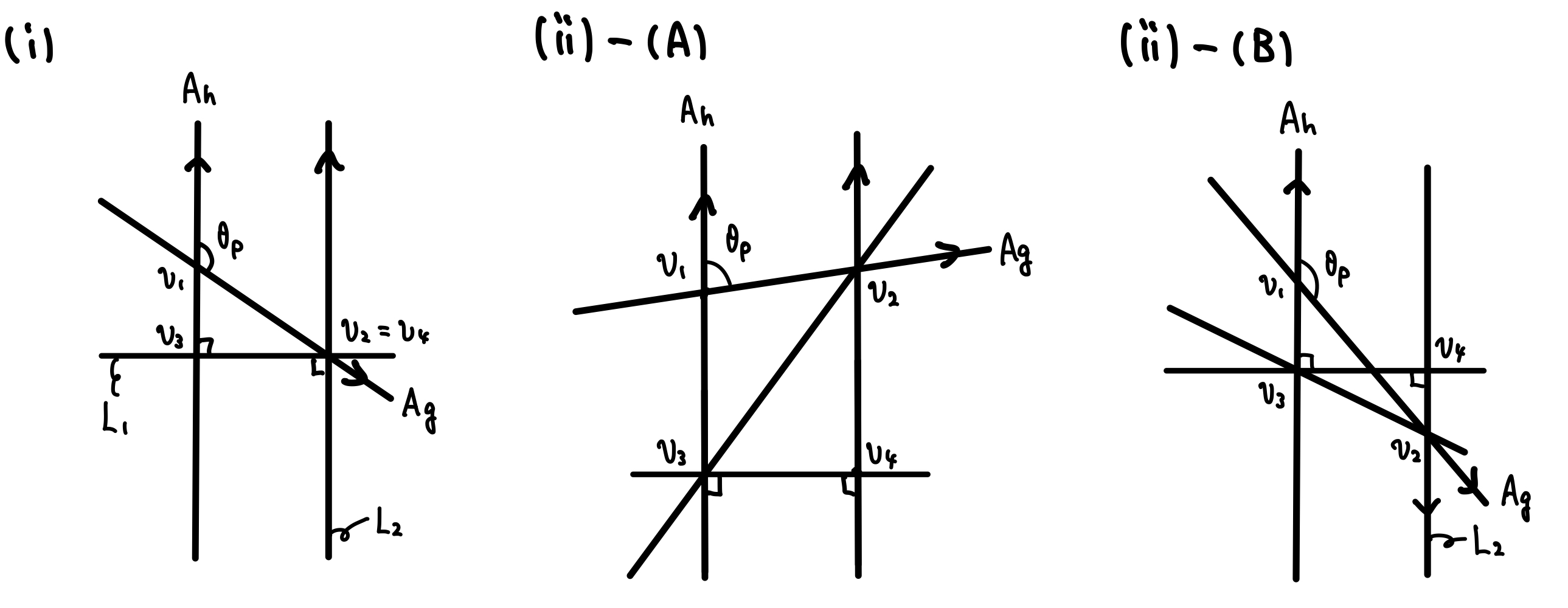}
\caption{\label{fig.twosided_onesided}The value of $\cosh\left(\dfrac{t_g}{2}\right)\sinh\left(\dfrac{t_h}{2}\right) + \sinh\left(\dfrac{t_g}{2}\right)\cosh\left(\dfrac{t_h}{2}\right)\cos\theta_P$ is zero on the left, positive in the middle, and negative on the right.}
%\caption{\label{fig.cosh}Theorem \ref{thm cosh}}
\end{figure}
Then, we have $a = \varepsilon_{v_2} \varepsilon_{v_1}$ and $b = \rho_{v_1 v_3} \varepsilon_{v_1} \varepsilon_{v_3}$. Hence, we have
\[
\begin{aligned}
a \circ b &= \varepsilon_{v_2} \varepsilon_{v_1} \rho_{v_1 v_3} \varepsilon_{v_1} \varepsilon_{v_3} \\
         &= \varepsilon_{v_2} \varepsilon_{v_1} \varepsilon_{v_1} \rho_{v_1 v_3} \varepsilon_{v_3} \quad (\text{by (\ref{rot_and_ref}})) \\
         &= \varepsilon_{v_2} \rho_{v_1 v_3} \varepsilon_{v_3} \\
         &\overset{(*)}{=} \rho_{L_2} \varepsilon_{v_2} \varepsilon_{v_4}.
\end{aligned}
\]
Here, \( \rho_{L_2} \) is a reflection about the line $L_2$. The identity \((*)\) is equivalent to
\begin{align}\label{34132}
\varepsilon_{v_3} \varepsilon_{v_4} = \rho_{v_1 v_3} \rho_{L_2}.
\end{align}

By Proposition \ref{rot. and rot.}, the left-hand side is a hyperbolic element whose axis is \( v_4 v_3 \) with direction from \( v_4 \) to \( v_3 \), and whose translation length is \( 2 d(v_4, v_3) \). Moreover, Proposition \ref{ref. and ref.} shows that the right-hand side also defines a hyperbolic element with the same axis, direction, and translation length. Since a hyperbolic element is uniquely determined by its axis, direction, and translation length, we obtain (\ref{34132})

\end{proof}

By using the same approach as we used in the proof of Theorem \ref{thm_hyp_gli}, prove the following theorem as well.

\begin{thm} \label{thm one-sided two-sided}
Let $g$ be a glide-reflection of the hyperbolic plane and $h$ a hyperbolic element of the hyperbolic plane. Suppose that $A_{g}$ and $A_{h}$ intersect at a point $P$. Denote by $\theta_P$ the angle at P between forward direction of $A_{g}$ and $A_{h}$. Then the axis \( A_{gh} \) intersects transversely with the axis \( A_h \) at the point \( Q \) located at a distance of \( t_h/2 \) from \( P \) in the opposite direction of \( h \). 
\begin{comment}
\begin{enumerate}
  \item[(i)] a reflection if 
  \[
  \sinh\left(\dfrac{t_{g}}{2}\right)\cosh\left(\dfrac{t_{h}}{2}\right) + \cosh\left(\dfrac{t_{g}}{2}\right)\sinh\left(\dfrac{t_{h}}{2}\right)\cos(\theta_P) = 0,
  \]
  \item[(ii)] a glide-reflection if 
  \[
  \sinh\left(\dfrac{t_{g}}{2}\right)\cosh\left(\dfrac{t_{h}}{2}\right) + \cosh\left(\dfrac{t_{g}}{2}\right)\sinh\left(\dfrac{t_{h}}{2}\right)\cos(\theta_P) \neq 0.
  \] 
\end{enumerate}

In particular, if the composition $g\circ h$ is a glide-reflection, then

\begin{align}
\sinh\left(\frac{t_{g\circ h}}{2}\right) = |\sinh\left(\frac{t_{g}}{2}\right)\cosh\left(\frac{t_{h}}{2}\right) + \cosh\left(\frac{t_{g}}{2}\right)\sinh\left(\frac{t_{h}}{2}\right)\cos(\theta_P)|.
\end{align}
\end{comment}
\end{thm}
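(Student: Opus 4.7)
The plan is to mimic the rotation-and-reflection argument in the proof of Theorem \ref{thm_hyp_gli}, with the roles of the glide-reflection and the hyperbolic element exchanged. Set $v_1 := P$, let $v_2$ denote the candidate point $Q$ on $A_h$ at distance $t_h/2$ from $v_1$ in the direction opposite to $h$, and let $v_3$ lie on $A_g$ at distance $t_g/2$ from $v_1$ in the forward direction of $g$. By Proposition \ref{rot. and rot.} I have $h = \varepsilon_{v_1}\varepsilon_{v_2}$, while $\varepsilon_{v_3}\varepsilon_{v_1}$ is the hyperbolic part of $g$, so $g = \rho_{A_g}\varepsilon_{v_3}\varepsilon_{v_1}$. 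Multiplying and using $\varepsilon_{v_1}^2 = \mathrm{id}$ gives $gh = \rho_{A_g}\varepsilon_{v_3}\varepsilon_{v_2}$, and since $v_3 \in A_g$ the decomposition $\varepsilon_{v_3} = \rho_{A_g}\rho_L$, where $L$ is the geodesic through $v_3$ perpendicular to $A_g$, collapses the first two factors via (\ref{rot_and_ref}) to yield
\[
gh = \rho_L\,\varepsilon_{v_2}.
\]

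To exhibit the axis of $gh$, I introduce $v_4$ as the foot of the perpendicular from $v_2$ to $L$ and let $L_2$ denote this perpendicular, so that $L_2 \perp L$ at $v_4$ and both $v_2$ and $v_4$ lie on $L_2$. From $\varepsilon_{v_4} = \rho_L\rho_{L_2}$ I obtain $\rho_L = \varepsilon_{v_4}\rho_{L_2}$; and since $v_2 \in L_2$, identity (\ref{rot_and_ref}) gives $\rho_{L_2}\varepsilon_{v_2} = \varepsilon_{v_2}\rho_{L_2}$. Substituting,
\[
gh \;=\; \varepsilon_{v_4}\rho_{L_2}\varepsilon_{v_2} \;=\; \varepsilon_{v_4}\varepsilon_{v_2}\,\rho_{L_2}.
\]
By Proposition \ref{rot. and rot.} the product $\varepsilon_{v_4}\varepsilon_{v_2}$ is hyperbolic with axis precisely the line $L_2$ through $v_4$ and $v_2$. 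Hence $gh$ is the composition of such a hyperbolic element with the reflection in its own axis, i.e., a glide-reflection with axis $A_{gh} = L_2$; in particular $A_{gh}$ passes through $v_2 = Q$.

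Transversality of $A_{gh}$ and $A_h$ at $Q$ is automatic once we observe that $L_2 = A_h$ would force $A_g$ and $A_h$ to share the common perpendicular $L$ (meeting $A_g$ at $v_3$ and $A_h$ at $v_4$), hence to be either disjoint or equal, contradicting $A_g \cap A_h = \{P\}$. The only delicate step is choosing $v_3$ in the forward direction of $g$ (rather than backward) so that the cancellation $\varepsilon_{v_1}^2 = \mathrm{id}$ and the commutation (\ref{rot_and_ref}) interact cleanly; once this choice is made, the rest is a routine bookkeeping of reflections and $\pi$-rotations.
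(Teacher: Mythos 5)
Your argument is correct and is precisely the adaptation of the proof of Theorem \ref{thm_hyp_gli} that the paper has in mind for this statement (whose details it leaves to the reader): the same bookkeeping of order-two rotations and reflections, with your regroupings $\varepsilon_{v_3}=\rho_{A_g}\rho_{L}$ and $\varepsilon_{v_4}=\rho_{L}\rho_{L_2}$ playing the role of the paper's identity (\ref{34132}), and your placement of $v_2$ at distance $t_h/2$ behind $P$ is exactly what makes the cancellation of $\varepsilon_{v_1}$ work. The only point to keep in mind---present equally in the paper's own treatment of Theorem \ref{thm_hyp_gli}---is the degenerate case $v_2=v_4$, where $gh$ is a pure reflection with mirror $L_2$ through $Q$, so the conclusion persists once $A_{gh}$ is read as the fixed geodesic.
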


\section{Loops on a non-orientable surface and hyperbolic geometry}

In this section, we study some relation between loops on a non-orientable surface and glide-reflections.

Let $N$ be a non-orientable surface (possibly with punctures). We assume the Euler characteristic of $N$ is negative so that $N$ admits a complete hyperbolic metric. We call a closed curve a {\it puncture loop} if it is freely homotopic to a loop that winds around a puncture one or more times. A closed curve that is neither a puncture loop nor a trivial loop is called {\it essential}. For an essential free homotopy class $\Al$ , $\Al(X)$ denotes a geodesic representative of $\Al$ with respect to a complete hyperbolic metric $X$.

\subsection{Forward angles and lengths.}

For each transverse intersection point $P$ of \( \Al(X) \) and \( \Be(X) \), {\it the X forward angle of $\Al$ and $\Be$ at $P$}, denoted by $\theta_{P}(X)$, is defined as the angle between the directions of \( \Al(X) \) and \( \Be(X) \). See Figure \ref{fig.forward_angle_by_metric}.
\begin{figure}[H]
\label{Angle}
\centering\includegraphics[width=0.4\linewidth]{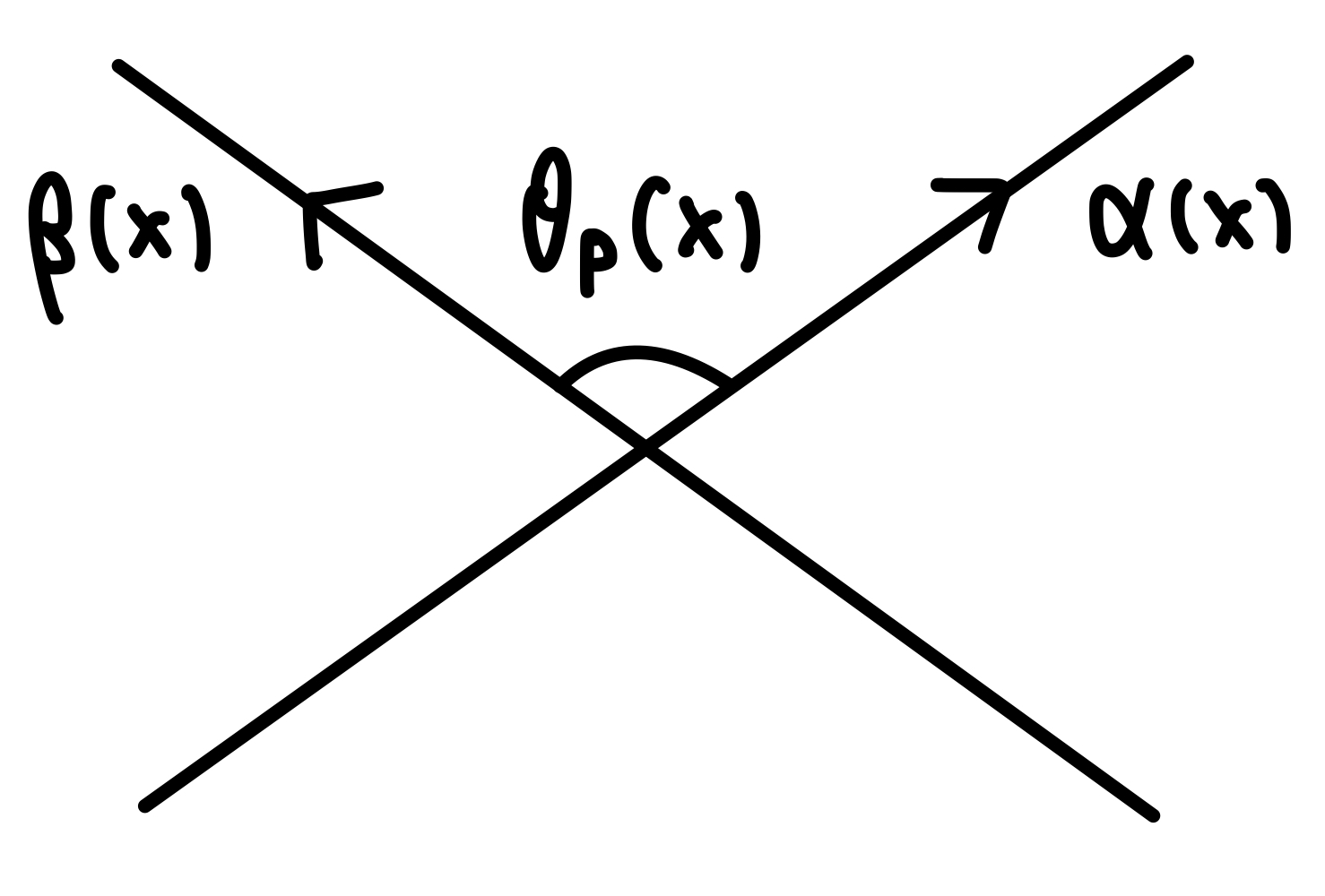}
\caption{\label{fig.forward_angle_by_metric}The forward angle $\theta_{P}(X)$}
\end{figure}

For each complete hyperbolic metric $X$ and each $\Al$ $\in$ $\hat{\pi}$, denote by $l_{\Al}(X)$ the length of $\Al(X)$. To simplify notation, we write $l_{\Al}$ instead of $l_{\Al}(X)$ if there is no confusion and similarly for $\theta_{P}(X)$.

\subsection{Smoothing an intersection of two closed geodesics}

Theorem \ref{thm two-sided two-sided} and Theorem \ref{Trgh} imply the following.

\begin{lemma} \label{general_intersection}
Let $X$ be a complete hyperbolic metric on $N$ and $\Al, \Be \in \hat{\pi}$. Suppose that $P$ is a transverse intersection point of $\Al(X)$ and $\Be(X)$. Then, the following holds:

\textbf{Case 1:} If both $\Al$ and $\Be$ are two-sided, then the free homotopy class $|\Al_P\Be_P|$ is essential. Moreover, $\cosh\left(\dfrac{l_{|\Al_P\Be_P|}}{2}\right)$ satisfies the equation coming from (\ref{Trcoshsinh}) and (\ref{Trcoshcoshsinhsinhcos}).

\textbf{Case 2:} If $\Al$ is one-sided and $\Be$ is two-sided, then the free homotopy class $|\Al_P\Be_P|$ is essential. Moreover, $\sinh\left(\dfrac{l_{|\Al_P\Be_P|}}{2}\right)$ satisfies the equation coming from (\ref{Trcoshsinh}) and (\ref{Trsinhcoshcoshsinhcos}).

\textbf{Case 3:} Let both $\Al$ and $\Be$ be one-sided.
\noindent If the quantity
\[
\left|\sinh\left(\frac{l_{\Al}}{2}\right)\sinh\left(\frac{l_{\Be}}{2}\right) + \cosh\left(\frac{l_{\Al}}{2}\right)\cosh\left(\frac{l_{\Be}}{2}\right)\cos\theta_{P}\right|,
\]
equals $1$, then the free homotopy class $|\Al_P\Be_P|$ is a puncture loop.

On the other hand, if the above quantity is strictly greater than $1$, then the free homotopy class $|\Al_P\Be_P|$ is essential. Moreover, $\cosh\left(\dfrac{l_{|\Al_P\Be_P|}}{2}\right)$ equals the above quantity.
\end{lemma}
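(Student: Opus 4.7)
The plan is to lift everything to the universal cover $\mathbb{H}$ and read off the type, and when relevant the translation length, of the element of $\pi_1(N)$ that represents $|\alpha_P\beta_P|$, using the trace formulas already established in Section~2.

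First I would pick a lift $\tilde P$ of $P$ in $\mathbb{H}$ and let $g, h \in \pi_1(N)\subset \mathrm{PSL}_2^{\pm}(\mathbb{R})$ be the deck transformations realizing $\alpha_P$ and $\beta_P$ as loops based at $P$, normalized so that their axes $A_g, A_h$ are the lifts of $\alpha(X), \beta(X)$ through $\tilde P$. Under this setup the conjugacy class of $gh$ represents the free homotopy class $|\alpha_P\beta_P|$; moreover $g$ is hyperbolic iff $\alpha$ is two-sided and a glide-reflection iff $\alpha$ is one-sided, with $t_g=l_\alpha$ in either case (and similarly for $h$), while the angle between the forward directions of $A_g$ and $A_h$ at $\tilde P$ coincides with $\theta_P$. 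The second ingredient is the dictionary between isometry type of $gh$ and the topology of its free homotopy class: since $\pi_1(N)$ is torsion-free and acts freely on $\mathbb{H}$, no nontrivial element is elliptic or a reflection; a hyperbolic element (resp.\ glide-reflection) corresponds to an essential two-sided (resp.\ one-sided) geodesic whose length equals the translation length, and a parabolic element corresponds to a puncture loop.

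With these two inputs the three cases are straightforward. In Case 1, $\det g=\det h=1$ and Theorem \ref{thm two-sided two-sided} shows $gh$ is hyperbolic and supplies formula (\ref{Trcoshcoshsinhsinhcos}); combining with (\ref{Trcoshsinh}) converts $\tfrac{1}{2}|\!\operatorname{Tr}(gh)|=\cosh(l_{|\alpha_P\beta_P|}/2)$ into the required identity, and hyperbolicity yields essentiality. In Case 2, $\det(gh)=-1$, so $gh$ is either a reflection or a glide-reflection; the reflection possibility is eliminated because $\pi_1(N)$ acts freely, so $gh$ is a glide-reflection and the class is essential, with the length formula obtained by combining Theorem \ref{Trgh} Case 1 with (\ref{Trcoshsinh}) through $2\sinh(l_{|\alpha_P\beta_P|}/2)=|\!\operatorname{Tr}(gh)|$. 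In Case 3, $\det(gh)=1$ and torsion-freeness rules out the elliptic possibility; Theorem \ref{Trgh} Case 2 identifies $\tfrac{1}{2}|\!\operatorname{Tr}(gh)|$ with the displayed quantity, so if that quantity equals $1$ then $gh$ is parabolic and the class is a puncture loop, while if it exceeds $1$ then $gh$ is hyperbolic and (\ref{Trcoshsinh}) gives $\cosh(l_{|\alpha_P\beta_P|}/2)$ equal to the quantity.

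The main subtlety is excluding reflections in Case 2 and elliptic elements in Case 3; both rest on the same observation that $\pi_1(N)$ is a torsion-free discrete subgroup of $\mathrm{PSL}_2^{\pm}(\mathbb{R})$ acting freely on $\mathbb{H}$. Everything else is an assembly of the trace formulas proved in Section 2 together with the standard correspondence between conjugacy classes in $\pi_1(N)$ and free homotopy classes of loops on $N$.
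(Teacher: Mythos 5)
Your argument is correct and follows essentially the same route the paper intends: the paper gives no written proof, stating only that Theorem \ref{thm two-sided two-sided} and Theorem \ref{Trgh} imply the lemma, and your write-up supplies exactly the standard details behind that claim (deck transformations through a lift of $P$, the dictionary between isometry type and two-sided/one-sided/puncture classes, and the exclusion of reflections and elliptics via the free, torsion-free action of $\pi_1(N)$), combined with (\ref{Trcoshsinh}). No gaps.
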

For the orientable case, see \cite[Lemma 2.7]{Chas-Kabiraj}.

\begin{thm} \label{mainthm1}
Let $X$ be a complete hyperbolic metric on $N$ and $\Al,\Be \in \hat{\pi}$. Suppose \( \Al(X) \) and \( \Be(X) \) intersect at a point $P$ transversely, and $|\Al^m_P\Be_P|$ is a puncture loop, for some non-zero integer $m$. Then such integers are odd, and the number of nonzero integers $m$ is at most two. Moreover, if the number is two, the odd numbers must be consecutive.
\end{thm}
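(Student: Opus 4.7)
The plan is to reduce the puncture-loop condition to the scalar equation $|F(m)|=1$ for a single analytic function $F$ of $m$, to bound its real solutions by two, and then to rule out non-consecutive odd-integer solutions using torsion-freeness of $\pi_{1}(N)$.

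First I would invoke Lemma~\ref{general_intersection}: a puncture loop can only arise in Case~3, when both smoothed geodesics are one-sided. This forces $\Be$ and $\Al^{m}$ to be one-sided. Since even powers of a glide-reflection are orientation-preserving, $\Al$ itself must be one-sided and $m$ must be odd, proving the first assertion. Writing $x := l_{\Al}/2$, $A := \cosh(l_{\Be}/2)\cos\theta_{P}$, $B := \sinh(l_{\Be}/2)$, and
\[
F(m) := A\cosh(mx) + B\sinh(mx),
\]
a short check (using $l_{\Al^{m}} = |m|\,l_{\Al}$ together with forward angle $\theta_{P}$ if $m>0$ and $\pi - \theta_{P}$ if $m<0$) shows that the Case~3 condition in Lemma~\ref{general_intersection} is exactly $|F(m)| = 1$ for every nonzero odd integer $m$.

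Next I would show that $|F(m)|=1$ has at most two real solutions. Substituting $w := e^{m l_{\Al}}$ converts $F(m)^{2} = 1$ into a quadratic equation in $w$ with at most two positive roots; equivalently, hyperbolic addition rewrites $F$, up to sign, as $R\cosh(mx + \phi_{0})$ when $A^{2}>B^{2}$ or as $R\sinh(mx + \phi_{0})$ when $A^{2}<B^{2}$ (the degenerate case $A^{2}=B^{2}$ contributing at most one solution), and each of these one-parameter families crosses the level $\pm 1$ at most twice. In particular at most two odd integers solve the equation.

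The main obstacle is to upgrade ``at most two'' to ``two consecutive''. Here I would assume, toward a contradiction, that $m_{1}<m_{2}$ are odd integer solutions with $m_{2}\ge m_{1}+4$, so that $m_{0}:=m_{1}+2$ is an odd integer strictly between them. The normal form for $F$ in the previous step shows that $|F|<1$ strictly on the open interval bracketed by its two level-one crossings, so $|F(m_{0})|<1$; equivalently $\tfrac{1}{2}|\tr \rho_{X}(\Al^{m_{0}}_{P}\Be_{P})|<1$, which makes $\rho_{X}(\Al^{m_{0}}_{P}\Be_{P})$ an elliptic isometry. Because $N$ is a hyperbolic surface without cone points, $\pi_{1}(N)\hookrightarrow \operatorname{PSL}_{2}^{\pm}(\mathbb{R})$ is torsion-free and contains no nontrivial elliptic element; moreover $\rho_{X}(\Al^{m_{0}}_{P}\Be_{P})\ne 1$, since otherwise $\Be = \Al^{-m_{0}}$ in $\pi_{1}(N,P)$ would force $A_{\Al}=A_{\Be}$ and contradict the transverse intersection at $P$. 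This contradiction yields $m_{2}=m_{1}+2$.
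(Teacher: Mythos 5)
Your proposal is correct and takes essentially the same route as the paper: Lemma~\ref{general_intersection} forces Case~3 (both loops one-sided, hence $m$ odd), the condition becomes $|A\cosh(mx)+B\sinh(mx)|=1$, the normal-form analysis (the paper's five graph cases) bounds the solutions by two, and a non-consecutive pair would give an intermediate odd integer with value strictly less than $1$ --- the contradiction the paper leaves implicit and you correctly spell out via a nontrivial elliptic element in the torsion-free holonomy group. One small slip: substituting $w=e^{ml_{\Al}}$ turns $F(m)^{2}=1$ into a quartic in $w$ (equivalently two quadratics $F=\pm1$, with up to four positive roots), not a single quadratic, but your parallel $R\cosh/R\sinh$ rewriting is correct and is what actually delivers the bound of two, so the argument stands.
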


\begin{proof}
By Lemma \ref{general_intersection}, it follows that both \( \alpha^m \) and \( \beta \) must be one-sided and an odd integer $m$ satisfies

\begin{align}
\left|\sinh\left(m\frac{l_{\Al}}{2}\right)\sinh\left(\frac{l_{\Be}}{2}\right) + \cosh\left(m\frac{l_{\Al}}{2}\right)\cosh\left(\frac{l_{\Be}}{2}\right)\cos\theta_P\right|=1.
\end{align}
In particular, when \( m \) is negative, the forward angle should must be changed. However, even in this case, the above equation holds.

Let
\[
f(t) = \left|\sinh (t) \sinh\left(\frac{l_{\Be}}{2}\right) + \cosh (t) \cosh \left(\frac{l_{\Be}}{2}\right) \cos \theta_P\right|.
\]
The number of nonzero integers \( m \) that satisfy the given conditions, is at most the number of times \( f(t) \) crosses the value 1. If we denote \( r = \sinh\left(\dfrac{l_{\Be}}{2}\right) \) and \( s = \cosh\left(\dfrac{l_{\Be}}{2}\right) \cos \theta_P,\)
we have 
\[
f(t) = |r \sinh t + s \cosh t |,
\]
The graph \( y = f(t) \) is given as in Figure \ref{fig_five_functions_of_f} depending on the values of \( r \) and \( s \). Therefore, by Figure \ref{fig_five_functions_of_f}, the number of nonzero integers $m$ satisfying the condition is at most two.

\begin{figure}[H]
\label{fig_five_functions_of_f}
\centering\includegraphics[width=1\linewidth]{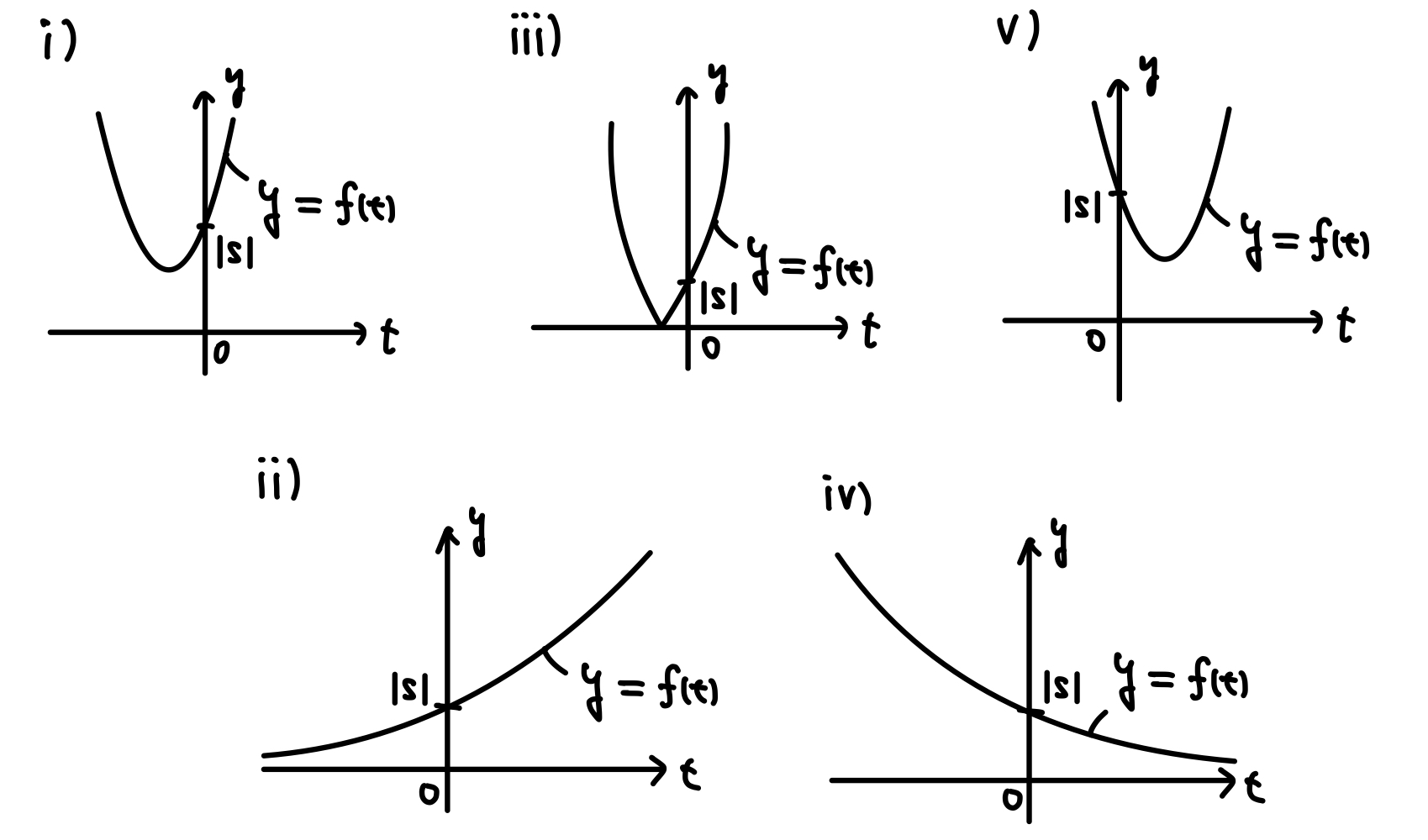}
\caption{\label{fig_five_functions_of_f}Five cases of the function \( f(t) \), determined by the values of \( r \) and \( s \): (i) \( s > r \), (ii) \( s = r \), (iii) \( -r < s < r \), (iv) \( s = -r \), (v) \( s < -r \).}
\end{figure}

Finally, let \(m_1\) and \(m_2\) be such odd numbers. Then, the graph of \(y = f(t)\) fits only one of the three patterns (i), (iii), and (v), and satisfies
\[
f\left(m_1\frac{l_{\Al}}{2}\right) = f\left(m_2\frac{l_{\Al}}{2}\right) = 1.
\]
Here, assuming that \(m_1\) and \(m_2\) are not consecutive, there exists an odd number \(m_3\) between \(m_1\) and \(m_2\). In particular, even in the case of the above three graphs,
\[
f\left(m_3\frac{l_{\Al}}{2}\right) < 1
\]
holds. This leads to a contradiction.
\end{proof}

\begin{ex}
The left example in Figure \ref{fig_puncture_loop_example} shows the case where only $|\Al_P\Be_P|$ is a puncture loop.  
The middle example in Figure \ref{fig_puncture_loop_example} shows the case where both $|\Al_P\Be_P|$ and $|\Al^{-1}_{P}\Be_P|$ are puncture loops.  
In particular, the forward angle at $P$ is $\dfrac{\pi}{2}$.  
The right example in Figure \ref{fig_puncture_loop_example} shows the case where both $|\Al_P\Be_P|$ and $|\Al^{3}_{P}\Be_P|$ are puncture loops.
In particular, the forward angle at $P$ is greater than $\dfrac{\pi}{2}$.  
\begin{figure}[H]
\label{Puncture_loop_example}
\centering
\includegraphics[width=1\linewidth]{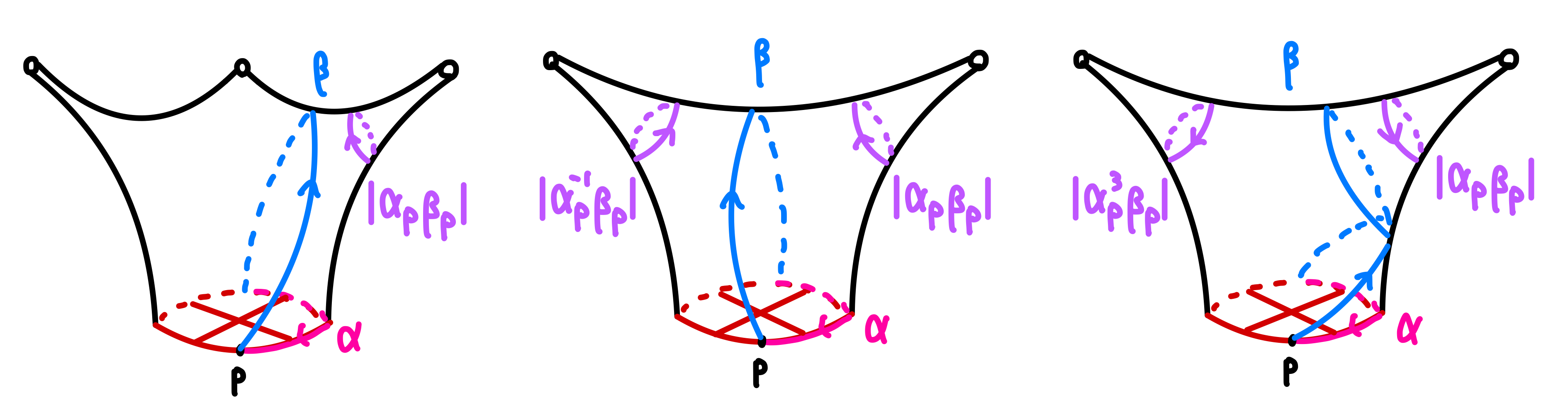}
\caption{\label{fig_puncture_loop_example}Puncture loops obtained by smoothing an intersection of two closed geodesics.}
\end{figure}

Theorem \ref{mainthm1} states that the nonzero odd integers \( m \) for which \( |\alpha^m_P \beta_P| \) is a puncture loop are at most two and must be consecutive. This naturally raises the question of how the forward angle \( \theta_P \) varies depending on the winding number \( m \). In particular, the following theorem describes how \( \theta_P \) behaves when two consecutive odd integers satisfy the puncture loop condition.

\begin{thm}
Let $X$ be a complete hyperbolic metric on $N$ and $\Al,\Be \in \hat{\pi}$. Denote by $P$ a transverse intersection point of \( \Al(X) \) and \( \Be(X) \), and let  \( m \in 2\mathbb{Z} \). If the classes $\left| \alpha^{m+1}_p \beta_p \right|$ and $\left| \alpha^{m-1}_p \beta_p \right|$ are puncture loops, then the followings hold:
\[
\begin{cases}
\theta_P > \dfrac{\pi}{2} & (m > 0), \\[6pt]
\theta_P = \dfrac{\pi}{2} & (m = 0), \\[6pt]
\theta_P < \dfrac{\pi}{2} & (m < 0).
\end{cases}
\]
\end{thm}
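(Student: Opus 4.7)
The plan is to apply Lemma \ref{general_intersection} to turn the two puncture-loop conditions into a pair of level-set equations for the function $f$ from the proof of Theorem \ref{mainthm1}, and then exploit the symmetry of $f$ about its distinguished critical (or zero) point to pin down $\operatorname{sign}(m)$ in terms of $\cos\theta_P$. By Lemma \ref{general_intersection}, since $m$ is even so the exponents $m\pm 1$ are both odd, both $\Al$ and $\Be$ must be one-sided, and the two puncture-loop conditions become
\begin{equation*}
f\!\left(\frac{(m-1)l_{\Al}}{2}\right) = f\!\left(\frac{(m+1)l_{\Al}}{2}\right) = 1,
\end{equation*}
where $f(t) := |r\sinh t + s\cosh t|$, $r := \sinh(l_{\Be}/2) > 0$, and $s := \cosh(l_{\Be}/2)\cos\theta_P$. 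Since $l_{\Al} > 0$, these are two distinct preimages of $1$, so the shape classification behind the proof of Theorem \ref{mainthm1} places us in exactly one of cases (i), (iii), (v) of Figure \ref{fig_five_functions_of_f}.

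Next I would bring $f$ into a centered normal form. In cases (i) and (v) (where $|s| > r$), choosing $t_0$ with $\tanh t_0 = -r/s$ and applying the hyperbolic addition formulae yields
\begin{equation*}
r\sinh(t_0 + u) + s\cosh(t_0 + u) = \operatorname{sign}(s)\,\sqrt{s^2-r^2}\,\cosh u,
\end{equation*}
so $f(t) = \sqrt{s^2-r^2}\,\cosh(t-t_0)$ is even about $t_0$. In case (iii) (where $|s| < r$), choosing $t_0$ with $\tanh t_0 = -s/r$ instead gives $f(t) = \sqrt{r^2-s^2}\,|\sinh(t-t_0)|$, again symmetric about $t_0$. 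In every one of the three cases the two preimages of $1$ are therefore symmetric about $t_0$, and so their midpoint must equal $t_0$:
\begin{equation*}
\frac{m\,l_{\Al}}{2} \;=\; \frac{1}{2}\!\left(\frac{(m-1)l_{\Al}}{2} + \frac{(m+1)l_{\Al}}{2}\right) \;=\; t_0.
\end{equation*}

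Finally, since $l_{\Al} > 0$, $\operatorname{sign}(m) = \operatorname{sign}(t_0)$, and I would read off the signs case by case. In case (i), $s > r > 0$ forces $t_0 < 0$ and $\cos\theta_P > 0$; in case (v), $s < -r < 0$ forces $t_0 > 0$ and $\cos\theta_P < 0$; in case (iii), $\tanh t_0 = -s/r$ gives $\operatorname{sign}(t_0) = -\operatorname{sign}(s)$, with $t_0 = 0$ precisely when $\cos\theta_P = 0$. Uniformly, $\operatorname{sign}(m) = -\operatorname{sign}(\cos\theta_P)$, which for $\theta_P \in (0,\pi)$ is exactly the trichotomy claimed in the statement. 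The main obstacle I anticipate is not a deep one but a bookkeeping hazard: ensuring that the sign conventions in the normal form and in the definition of $t_0$ consistently produce the \emph{opposite}-sign correspondence between $m$ and $\cos\theta_P$ across all three cases, rather than accidentally collapsing one of the boundary cases.
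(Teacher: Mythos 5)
Your proposal is correct, and its final step is genuinely different from the paper's. Both arguments share the same reduction: by Lemma \ref{general_intersection} (and the observation in the proof of Theorem \ref{mainthm1} handling negative exponents via the reversed forward angle), the two puncture-loop hypotheses become $f\bigl(\tfrac{(m-1)l_{\Al}}{2}\bigr)=f\bigl(\tfrac{(m+1)l_{\Al}}{2}\bigr)=1$ for $f(t)=|r\sinh t+s\cosh t|$ with $r=\sinh(l_{\Be}/2)$, $s=\cosh(l_{\Be}/2)\cos\theta_P$. From there the paper splits into the three cases $m>0$, $m=0$, $m<0$: for $m\neq 0$ it solves the (sign-resolved) equation for $\cos\theta_P$, i.e.\ considers $g(t)=\frac{1-r\sinh t}{\cosh t\cosh(l_{\Be}/2)}$ and $h(t)=\frac{-1-r\sinh t}{\cosh t\cosh(l_{\Be}/2)}$, and uses their strict monotonicity on $\mathbb{R}_{>0}$ resp.\ $\mathbb{R}_{<0}$ to rule out $\cos\theta_P\geq 0$ resp.\ $\cos\theta_P\leq 0$, while $m=0$ is handled by a direct algebraic manipulation of the two absolute-value equations. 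You instead put $r\sinh t+s\cosh t$ into the normal form $\pm\sqrt{|s^2-r^2|}\,\cosh(t-t_0)$ or $\sqrt{r^2-s^2}\,\sinh(t-t_0)$, so that $f$ is even about $t_0$, conclude that the midpoint of the two level-$1$ points equals $t_0$, hence $\tfrac{m\,l_{\Al}}{2}=t_0$, and read off $\operatorname{sign}(m)=\operatorname{sign}(t_0)=-\operatorname{sign}(\cos\theta_P)$ uniformly (your sign bookkeeping in all three shape cases of Figure \ref{fig_five_functions_of_f} checks out, and $|s|=r$ is excluded since $f$ is then injective). What your route buys is a single unified argument that treats all signs of $m$ at once, makes the role of the symmetry axis $t_0$ explicit, and yields the slightly stronger statement $m\,l_{\Al}/2=t_0$; what the paper's route buys is that it avoids the normal-form computation and works directly with elementary monotonicity of the two explicit functions of $t$. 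Either proof is complete and correct.
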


\begin{proof}
First, if \( m > 0 \), then we define a function \( g(t) \) by  
\[
g(t) = \dfrac{1 - \sinh t \sinh \left( \dfrac{l_{\beta}}{2} \right)}{\cosh t \cosh \left( \dfrac{l_{\beta}}{2} \right)}.
\]
Then,
\[
g'(t) = \dfrac{-\sinh \left( \dfrac{l_{\beta}}{2} \right) - \sinh t}{\cosh^2 t \cosh \left( \dfrac{l_{\beta}}{2} \right)}.
\]
Since \( g(t) \) is a strictly decreasing monotonic function on $\mathbb{R}_{>0}$, we obtain  
\[
\theta_P > \dfrac{\pi}{2}.
\]

Next if \( m = 0 \), then we have  
\begin{align*}
\left| \sinh \left( \dfrac{l_{\alpha}}{2} \right) \sinh \left( \dfrac{l_{\beta}}{2} \right) + \cosh \left( \dfrac{l_{\alpha}}{2} \right) \cosh \left( \dfrac{l_{\beta}}{2} \right) \cos \theta_P \right| = 1,\\
\left| -\sinh \left( \dfrac{l_{\alpha}}{2} \right) \sinh \left( \dfrac{l_{\beta}}{2} \right) + \cosh \left( \dfrac{l_{\alpha}}{2} \right) \cosh \left( \dfrac{l_{\beta}}{2} \right) \cos \theta_P \right| = 1.
\end{align*}
Since these two equations hold, we get
\begin{align*}
\sinh \left( \dfrac{l_{\alpha}}{2} \right) \sinh \left( \dfrac{l_{\beta}}{2} \right) + \cosh \left( \dfrac{l_{\alpha}}{2} \right) \cosh \left( \dfrac{l_{\beta}}{2} \right) \cos \theta_P = 1,\\
-\sinh \left( \dfrac{l_{\alpha}}{2} \right) \sinh \left( \dfrac{l_{\beta}}{2} \right) + \cosh \left( \dfrac{l_{\alpha}}{2} \right) \cosh \left( \dfrac{l_{\beta}}{2} \right) \cos \theta_P = -1.
\end{align*}
Therefore, we obtain
\[
\theta_P = \dfrac{\pi}{2}.
\]

Finally If \( m < 0 \), then we define a function \( h(t) \) by  
\[
h(t) = \dfrac{-1 - \sinh t \sinh \left( \dfrac{l_{\beta}}{2} \right)}{\cosh t \cosh \left( \dfrac{l_{\beta}}{2} \right)}.
\]
Then,
\[
h'(t) = \dfrac{-\sinh \left( \dfrac{l_{\beta}}{2} \right) + \sinh t}{\cosh^2 t \cosh \left( \dfrac{l_{\beta}}{2} \right)}.
\]
Since \( h(t) \) is a strictly decreasing monotonic function on $\mathbb{R}_{<0}$, we obtain  
\[
\theta_P < \dfrac{\pi}{2}.
\]
\end{proof}
\end{ex}
\subsection{Self-intersection number of a loop on a non-orientable surface} Finally, we consider the self-intersections of a loop on a non-orientable surface. When a closed geodesic intersects another one-sided closed geodesic transversely, we can evaluate the number of self-intersection points. The following proposition provides an explicit lower bound for this number.
\begin{prop}
Let $X$ be a complete hyperbolic metric on $N$ and $\Al,\Be \in \hat{\pi}$ with \( \beta \) one-sided. Suppose \( \Al(X) \) and \( \Be(X) \) intersect at a point $P$ transversely, and $\left| \cos \theta_{P} \right| > \tanh \left( \dfrac{(2m-1) l_{\beta}}{2} \right)$. Then $\Al$ has at least $m$ self-intersection points.
\end{prop}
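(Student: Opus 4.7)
The plan is to work in the universal cover $\mathbb{H}$, count how many distinct deck-translates of a chosen lift of $\alpha(X)$ are crossed by that lift, and descend the count back to $N$ as a lower bound on the self-intersection number of $\alpha$.

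First I would normalize so that the axis of a glide-reflection representing $\beta$ is the imaginary axis and the chosen lift $\tilde P$ of $P$ is $i$, in which case $\beta$ acts on $\mathbb{H}$ by $z \mapsto -e^{l_\beta}\,\overline{z}$. Let $\tilde\alpha$ be the lift of $\alpha(X)$ through $i$ and write its ideal endpoints on $\mathbb{R}$ as $-e^{-t}$ and $e^{t}$. A tangent computation at $i$, in the same spirit as those appearing in the proof of Theorem \ref{Trgh}, yields the key identity $|\cos\theta_P| = \tanh|t|$.

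Next, for each nonzero $k \in \mathbb{Z}$, the geodesic $\beta^k\tilde\alpha$ is a distinct lift of $\alpha(X)$ through $ie^{k l_\beta}$, and a direct computation gives its ideal endpoints as $(-e^{2jl_\beta - t},\, e^{2jl_\beta + t})$ when $k = 2j$ is even and as $(-e^{(2j+1)l_\beta + t},\, e^{(2j+1)l_\beta - t})$ when $k = 2j+1$ is odd. Applying the interlinking criterion for pairs of geodesics in $\mathbb{H}$, even $k$ always gives nested endpoints and hence no crossing, while odd $k$ yields a transverse intersection with $\tilde\alpha$ if and only if $|k|\,l_\beta < 2|t|$, equivalently $\tanh(|k|\,l_\beta/2) < |\cos\theta_P|$. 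Together with monotonicity of $\tanh$, the hypothesis $|\cos\theta_P| > \tanh((2m-1)l_\beta/2)$ then furnishes $2m$ distinct lifts $\beta^{\pm 1}\tilde\alpha, \beta^{\pm 3}\tilde\alpha, \ldots, \beta^{\pm(2m-1)}\tilde\alpha$, each transversally crossing $\tilde\alpha$.

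Finally I would descend to $N$: each such crossing projects to a self-intersection of $\alpha(X)$, and the two lift-pairs $(\tilde\alpha, \beta^k\tilde\alpha)$ and $(\tilde\alpha, \beta^{k'}\tilde\alpha)$ project to the same self-intersection exactly when $k' = \pm k$, so the $2m$ values pair up as $\{\pm 1\}, \{\pm 3\}, \ldots, \{\pm(2m-1)\}$ and contribute $m$ distinct self-intersections of $\alpha$ in $N$. I expect this last identification step to be the main subtlety: one must rule out any further collapse of lift-pairs under the joint action of $\mathrm{Stab}(\tilde\alpha)$ and the swap of ordered pairs, which reduces to the fact that $\tilde\alpha \neq \tilde\beta$ forces $\langle\alpha\rangle \cap \langle\beta\rangle = \{1\}$ in $\pi_1(N)$, since two hyperbolic elements of a hyperbolic surface group commute only when they share an axis.
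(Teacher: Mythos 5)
Your reduction to the universal cover, the endpoint computation for $\beta^{k}\tilde\alpha$, and the linking criterion are all correct: the identity $|\cos\theta_P|=\tanh|t|$, the fact that even powers give nested (hence disjoint) translates, and the crossing condition $\tanh\left(|k|\,l_{\beta}/2\right)<|\cos\theta_P|$ for odd $k$ do produce the $2m$ crossings of $\tilde\alpha$ with $\beta^{\pm1}\tilde\alpha,\dots,\beta^{\pm(2m-1)}\tilde\alpha$. This part is essentially a coordinate version of the construction in the paper, which instead uses the perpendiculars to the axis of $\beta$ at half-integer multiples of $l_{\beta}$ and the reflection symmetry coming from one-sidedness, and works only with the $m$ translates on one side.

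The gap is in the descent step. What has to be excluded is a deck transformation $\gamma$ carrying the unordered pair $\{\tilde\alpha,\beta^{k}\tilde\alpha\}$ to $\{\tilde\alpha,\beta^{k'}\tilde\alpha\}$ with $|k|\neq|k'|$. Writing $a_{0}$ for a generator of the (infinite cyclic) stabilizer of $\tilde\alpha$ in the deck group, such a $\gamma$ exists precisely when a relation of the form $\beta^{-k'}a_{0}^{p}\beta^{k}=a_{0}^{q}$ (or $\beta^{k'}a_{0}^{p}=a_{0}^{q}\beta^{-k}$) holds in $\pi_{1}(N)$. Triviality of $\langle\alpha\rangle\cap\langle\beta\rangle$ does \emph{not} rule out relations of this shape: in a Klein bottle group or a Baumslag--Solitar group one has exactly such relations while the two cyclic subgroups still meet trivially, so the justification you give (commuting elements share an axis, hence $\langle\alpha\rangle\cap\langle\beta\rangle=\{1\}$) is not sufficient. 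What closes the gap is the stronger fact that $\langle a_{0},\beta\rangle$ is a two-generator, torsion-free, discrete subgroup of $\operatorname{PSL}^{\pm}_2(\mathbb{R})$ whose generators have distinct axes, hence is free of rank two with $\{a_{0},\beta\}$ as a basis (no complete hyperbolic surface has fundamental group $\mathbb{Z}^{2}$ or the Klein bottle group, and in a free group of rank two any two-element generating set is a basis); the displayed relations then force $k'=k$ or $k'=-k$, which is exactly the collapse you allow. Alternatively, you can avoid the algebra entirely, as the paper does: by Gauss--Bonnet the forward angles at the successive crossing points are strictly monotone, and since deck transformations preserve the angle between forward directions, crossings with distinct angles project to distinct self-intersection points of $\alpha(X)$.
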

\begin{proof}
Let \( \tilde{P}_{0} \) be a lift of \( P \) to the universal cover.  
Let \( \tilde{\beta} \) be the geodesic on the universal cover which covers \( \beta(X) \) and passes through \( \tilde{P}_{0} \). For any integer or half-integer $i \geq 0$, we denote by \( \tilde{P}_{i} \) the point obtained by moving from $P_0$ in the direction opposite to $\beta$ by $i \cdot l_{\beta}$ and by $\tilde{\alpha}_i$ the geodesic which covers $\alpha(X)$ and passes through $\tilde{P}_i$. Furthermore, for any half-integer $i$, let $L_{i}$ be the geodesic passing through $\tilde{P}_i$ and orthogonal to \( \tilde{\beta} \). See Figure \ref{fig_self-intersection_lift_1}.
\begin{figure}[H]
\label{Self-intersection}
\centering\includegraphics[width=0.9\linewidth]{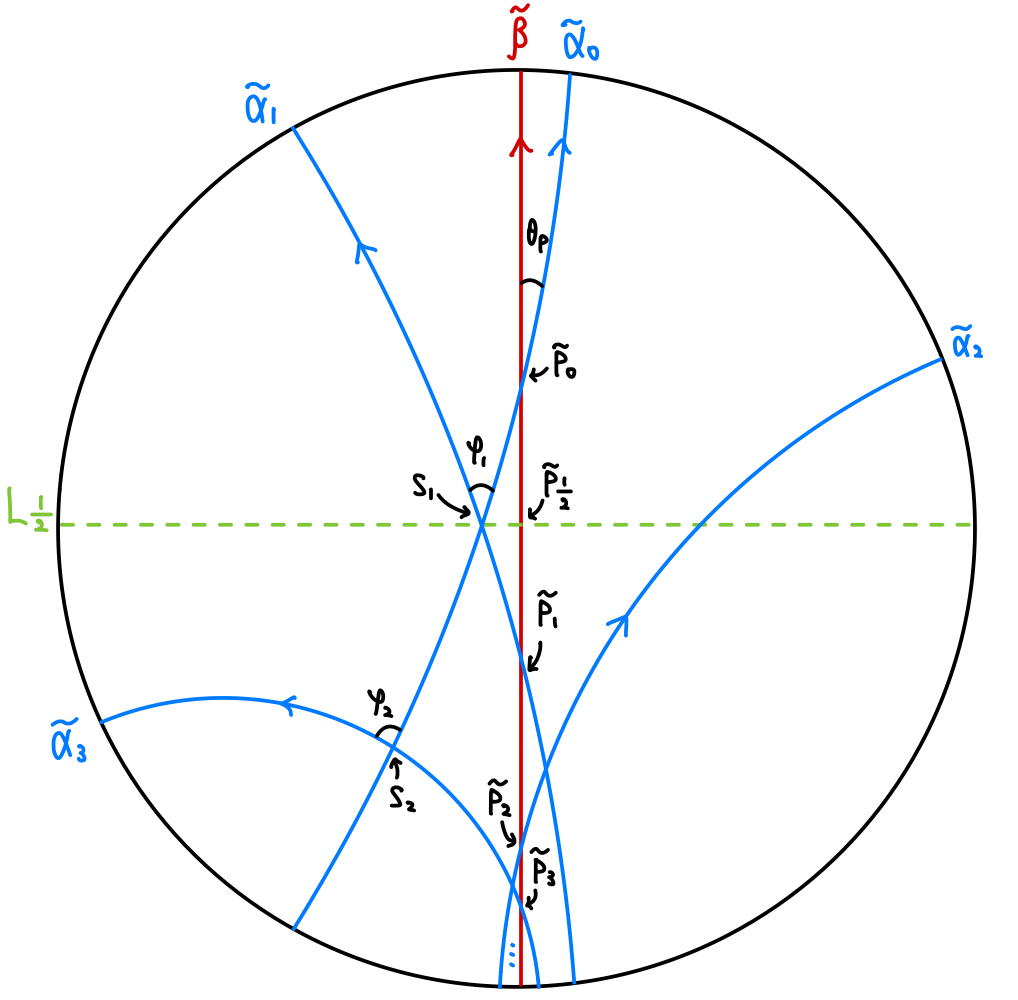}
\caption{\label{fig_self-intersection_lift_1}Lifts of $P$, $\alpha(X)$, and $\beta(X)$ in the Poincaré disk model.
}
\end{figure}

(i) If \( \cos \theta_{P} > \tanh \left( \dfrac{(2m-1) l_{\beta}}{2} \right) \), then \( \tilde{\alpha}_0 \) intersects the geodesics \( L_{\frac{2i-1}{2}}\) (\( i = 1, \dots, m \)) transversely.  
Let \( S_i \) (\( i = 1, \dots, m \)) be these intersection points.  
Since \( \beta \) is one-sided, the reflection with respect to \( L_{\frac{2i-1}{2}}\) maps $\tilde{\alpha}_{2i-1}$ to $\tilde{\alpha}_0$. Hence, $\tilde{\alpha}_{2i-1}$ and $\tilde{\alpha}_0$ intersect transversely at $S_i$. By the Gauss-Bonnet theorem, the forward angle $\varphi_i$ at $S_i$ satisfies
\[
\varphi_1 < \varphi_2 < \dots < \varphi_m.
\]
This implies that \( \alpha \) has at least \( m \) self-intersections.

(ii) If \( \cos \theta_{P} < -\tanh \left( \dfrac{(2m-1) l_{\beta}}{2} \right) \), the same argument as in (i) applies.
\end{proof}

By substituting \( m = 1 \) into the above proposition, we obtain the following corollary.

\begin{cor}
Let $X$ be a complete hyperbolic metric on $N$ and $\Al,\Be \in \hat{\pi}$ with \( \beta \) one-sided. Suppose \( \Al(X) \) and \( \Be(X) \) intersect at a point $P$ transversely, and $\left| \cos \theta_{P} \right| > \tanh \left( \dfrac{l_{\beta}}{2} \right)$. Then $\Al$ is not simple.
\end{cor}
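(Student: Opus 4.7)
The plan is to derive this corollary as an immediate specialization of the preceding proposition with $m=1$. First I would verify that the hypotheses match term by term: substituting $m=1$ into $|\cos\theta_{P}| > \tanh((2m-1)l_{\beta}/2)$ yields exactly $|\cos\theta_{P}| > \tanh(l_{\beta}/2)$, which is precisely the hypothesis of the corollary. The transversality of the intersection at $P$, the one-sidedness of $\beta$, and the setup of $X$, $\alpha$, $\beta \in \hat{\pi}$ are assumed verbatim in both statements, so the proposition applies directly with no further geometric input.

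Applying the proposition then gives that $\alpha$ has at least $m=1$ self-intersection point. Since a closed curve is called simple precisely when it has no self-intersections, this immediately yields that $\alpha$ is not simple, which is the desired conclusion.

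I do not anticipate any real obstacle, but the one mild point to confirm is that the self-intersection produced in the proof of the proposition genuinely descends to a self-intersection of $\alpha$ on $N$. Concretely, for $i=1$ the point $S_{1}$ is the transverse crossing of the two lifts $\tilde\alpha_{0}$ and $\tilde\alpha_{1}$ of $\alpha(X)$ in the universal cover (which exist because the hypothesis forces $\tilde\alpha_{0}$ to meet $L_{1/2}$ transversely and the reflection in $L_{1/2}$ exchanges these two lifts). These are distinct geodesics both covering $\alpha(X)$, so their common point projects to a transverse crossing of $\alpha(X)$ with itself in $N$, i.e.\ an honest self-intersection. Hence the corollary follows with essentially no computation beyond the substitution $m=1$.
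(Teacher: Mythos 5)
Your proposal is correct and matches the paper exactly: the corollary is obtained by substituting $m=1$ into the preceding proposition, and your extra check that the crossing of the two lifts descends to a genuine self-intersection of $\Al(X)$ on $N$ is a harmless (and sound) clarification of what the proposition already asserts.
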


\bibliography{PLONS}
\bibliographystyle{plain}
%参考文献リスト
%sample.bibというファイルから\cite{〇〇}で該当する文献を引用できる

\begin{comment}
\begin{thm}
Let $X$ be a complete hyperbolic metric on $N$ and $\Be\in \hat{\pi}$ be a one-sided loop. For any $\Al\in \hat{\pi}$, if $P$ is an transverse intersection point of \( \Al(Y) \) and \( \Be(Y) \) and its forward angle $\theta$ satisfies the condition: 

\begin{align}
|\cos(\theta_{P})|>\tanh\left(\frac{(2m-1)l_{\Be}}{2}\right),
\end{align}

then $\Al$ has at least $m$ self-intersections. 

\end{thm}

\begin{cor}
Let $X$ be a complete hyperbolic metric on $N$ and $\Be\in \hat{\pi}$ be a one-sided loop. For any $\Al\in \hat{\pi}$, if $P$ is an ($\Al(X)$, $\Be(X)$)-intersection point and its forward angle $\theta$ satisfies the condition: 

\begin{align}
|\cos(\theta)|>\tanh\left(\frac{l_{\Be}}{2}\right),
\end{align}
then $\Al$ is not simple closed curve.
\end{cor}
\end{comment}

\end{document}